\title[Positive curvature and rational ellipticity]{Positive curvature and rational ellipticity}
\def\titl{Positive curvature and rational ellipticity}
\def\auth{Manuel Amann and Lee Kennard}
\date{March 1st, 2014}
\subjclass[2010]{53C20 (Primary), 57N65, 55P62 (Secondary)}
\keywords{\noindent positive curvature, rational ellipticity, torus symmetry, Euler characteristic, Wilhelm conjecture, Halperin conjecture, Hopf conjecture}
\author{\auth}
\thanks{The first author was supported by a research grant of the German Research Foundation. The second author was partially supported by National Science Foundation Grant DMS-1045292.}
\newtheorem{theo}{Theorem}[section]
\newtheorem{main}{Theorem}
\newtheorem{maincor}[main]{Corollary}
\newtheorem*{main*}{Theorem}
\newtheorem*{mainprop*}{Proposition}
\newtheorem{mainconj}{Conjecture}
\newtheorem{prop}[theo]{Proposition}
\newtheorem{defi2}[theo]{Definition}
\newtheorem*{defi2*}{Definition}
\newenvironment{defi*}{\begin{defi2*}\normalfont}{\end{defi2*}}
\newenvironment{defin*}[1]{\begin{defi2*}[#1]\normalfont}{\end{defi2*}}
\newtheorem{rem2}[theo]{Remark}
\newenvironment{rem}{\begin{rem2}\normalfont}{\hfill$\boxbox$\end{rem2}}
\newtheorem{lemma}[theo]{Lemma}
\newtheorem{cor}[theo]{Corollary}
\newtheorem*{cor*}{Corollary}
\newtheorem{conj}[theo]{Conjecture}
\newtheorem{prob}[theo]{Problem}
\newenvironment{conjec}[1]{\begin{conj}[#1]\normalfont}{\end{conj}}
\newtheorem{ques}[theo]{Question}
\newtheorem*{conj*}{Conjecture}
\newtheorem*{theo*}{Theorem}
\newtheorem*{ques*}{Question}
\newtheorem*{mi2}{Main Idea}
\newtheorem{ex2}[theo]{Example}
\newenvironment{exa}[1]{\begin{ex2}[#1]\normalfont}{\hfill$\boxbox$\end{ex2}}
\newtheorem{exer2}[theo]{Exercise}
\newtheorem{alg2}[theo]{Algorithm}
\newcommand{\cc}{{\mathbb{C}}}                                     
\newcommand{\hh}{{\mathbb{H}}}                                     
\newcommand{\qq}{{\mathbb{Q}}}                                     
\newcommand{\rr}{{\mathbb{R}}}                                     
\newcommand{\pp}{{\mathbf{P}}}                                     
\newcommand{\s}{{\mathbb{S}}}                                      
\newcommand{\SO}{{\mathbf{SO}}}                                    
\newcommand{\U}{{\mathbf{U}}}                                      
\newcommand{\SU}{{\mathbf{SU}}}                                    
\newcommand{\Sp}{{\mathbf{Sp}}}                                    
\newcommand{\E}{{\mathbf{E}}}                                      
\newcommand{\F}{{\mathbf{F}}}                                      
\newcommand{\Spin}{{\mathbf{Spin}}}                                
\newcommand{\dif} {{\operatorname{d}}}                             
\newcommand{\In} {{\,\subseteq\,}}                                 
\newcommand{\Isom}{{\operatorname{Isom}}}                          
\newcommand{\cat} {{\operatorname{cat}}}                           
\newcommand{\rk}{{\operatorname{rk\,}}}                            
\newcommand{\co}{\colon\thinspace}                                 
\newcommand{\comment}[1]{}                                         
\newcommand{\xto}[1]{\xrightarrow{#1}}                             
\newcommand{\hto}[1]{\overset{#1}{\hookrightarrow}}                
\newcommand{\biq}[2]{#1\;\!\!\!\sslash \;\!\!\!#2}                 
\newcommand{\case}[1]{\textbf{Case #1.}}                           
\newcommand{\str}{\noindent\textbf{Structure of the article. }}    
\newcommand{\odd}{\textrm{odd}}                                    
\newcommand{\even}{\textrm{even}}                                  
\newcommand{\Z}{\mathbb{Z}}
\newcommand{\Q}{\mathbb{Q}}
\newcommand{\embedded}{\hookrightarrow}
\DeclareMathOperator{\cod}{cod}
\newcommand{\floor}[1]{\left\lfloor #1 \right\rfloor}
\newcommand{\pfrac}[2]{\left(\frac{#1}{#2}\right)}
\newcommand{\of}[1]{\left(#1\right)}
\DeclareMathOperator{\dk}{dk}
\newenvironment{prf}{\begin{proof}[\textsc{Proof}]} {\end{proof}}     
\begin{document}

 \thispagestyle{empty}


\begin{abstract}
Simply-connected manifolds of positive sectional curvature $M$ are speculated to have a rigid topological structure. In particular, they are conjectured to be rationally elliptic, i.e., all but finitely many homotopy groups are conjectured to be finite.

In this article we combine positive curvature with rational ellipticity to obtain several topological properties of the underlying manifold (see \cite{GroveHalperin82}). These results include a small upper bound on the Euler characteristic and confirmations of famous conjectures by Hopf and Halperin under additional torus symmetry. We prove several cases (including all known even-dimensional examples of positively curved manifolds) of a conjecture by Wilhelm.

\end{abstract}

\maketitle


\section*{Introduction}

The question of whether a manifold admits a positively curved Riemannian metric has been addressed in various ways over the decades. Astoundingly, all this effort did not bring forth a long list of examples. All the known simply-connected examples---mainly homogeneous spaces and biquotients---have one striking topological property in common. They are what is called \emph{rationally elliptic spaces}, meaning that their total rational homotopy $\pi_*(\cdot)\otimes \qq$ is finite dimensional. (See \cite{Ziller07} for a survey of examples, see Dearricott \cite{Dearricott11}, Grove--Verdiani--Ziller \cite{Grove-Verdiani-Ziller11}, and Petersen--Wilhelm \cite{Petersen-Wilhelm09pre} for two new examples in dimension seven and see \cite{FHT01} for an compendium on rational homotopy theory.) Rational ellipticity on its own has interesting consequences. In particular, it implies the upper bound on the sum of the Betti numbers $2^{\dim M}$ conjectured by Gromov.

This motivates a crucial conjecture due to Bott which states that an (almost) non-negatively curved manifold is rationally elliptic (see \cite{GroveHalperin82}). A conjecture by Hopf states that a positively curved manifold has positive Euler characteristic, which in the elliptic case endows the rational cohomology algebra with nice features; in particular, odd Betti numbers vanish. According to one of the most famous conjectures in rational homotopy theory made by Halperin, these \emph{positively elliptic} or \emph{$F_0$-spaces} $F$ have the property that any fibration of simply-connected spaces $F\hto{} E\to B$ satisfies that $H^*(E;\qq)\cong H^*(F;\qq)\otimes H^*(B;\qq)$ as modules---see Section \ref{sec01}.

In the 1990s, Grove propagated the idea to focus on positively curved metrics that admit large isometry groups. For several results in this article we shall follow this approach in the rationally elliptic setting. The measure of symmetry we will consider in this paper is the \textit{symmetry rank}, which is the rank of the isometry group. In other words the symmetry rank of a Riemannian manifold is at least $r$ if and only if there exists an effective, isometric action of a torus of dimension $r$.

Many topological classification results of varying strengths (diffeomorphism, homeomorphism, homotopy, etc.) have been proven under the assumption that the symmetry rank is sufficiently large. The classification theorems of Grove--Searle (see \cite{GroveSearle94}) and Wilking (see \cite{Wilking03}) are prototypical examples. For related results and context, we refer the reader to the comprehensive surveys of Grove \cite{Grove09}, Wilking \cite{Wilking07} and Ziller \cite{Ziller??}.

In this article
we shall derive strong restrictions on the Euler characteristic from both above and below in the rationally elliptic case with torus symmetry. Moreover, in this setting we shall prove the classical Halperin conjecture from rational homotopy theory, and we apply rational techniques to prove a conjecture of Fred Wilhelm on the relation between the dimensions of base and total space within a submersion of positively curved manifolds. Finally, we characterize those biquotients---biquotients form one of the largest classes of known rationally elliptic spaces---which might admit a metric of positive curvature and logarithmic symmetry. We refer the reader to Section \ref{sec01} for a detailed outline of the concepts which could merely be sketched here.

We begin with the announced statement on the Wilhelm conjecture. Since we can formulate it with (rational) fibrations of topological spaces instead of Riemannian submersions of manifolds, it certainly provides a stronger statement in the depicted cases. In particular, it obviously implies the Wilhelm conjecture for manifolds that are currently known to admit positive sectional curvature.
\begin{main}\label{theoE}
Let $F\hto{}M^n\to B$ be a non-trivial fibration of simply-connected compact manifolds. Suppose that $M$ satisfies one of the following:
\begin{itemize}
\item
Either $H^*(M;\qq)$ is singly generated as an algebra, or
\item
$M$ is rationally one of the known even-dimensional examples of manifolds carrying positive curvature, or
\item
$M$ is rationally a hermitian symmetric space, and $M$ is positively curved with symmetry rank at least $2\log_{2} n+6$.
\end{itemize}
Then it holds that $\dim B>\dim F$.
\end{main}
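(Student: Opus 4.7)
The plan is to use rational homotopy theory to study the fibration through its Koszul--Sullivan model in each of the three cases. Since $M$ is rationally elliptic in each case, so are $F$ and $B$, and their minimal Sullivan models fit into a KS extension
\[
(\Lambda V_B, d_B) \hookrightarrow (\Lambda V_B \otimes \Lambda V_F, D) \twoheadrightarrow (\Lambda V_F, \bar d)
\]
whose total cohomology realises $H^*(M;\qq)$. The formal dimensions of $F$ and $B$ can be read off from the degrees of generators in $V_F$ and $V_B$, and the strict inequality $\dim B > \dim F$ reduces to a bookkeeping matter once the KS extension has been sufficiently constrained.

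For the first bullet, $H^*(M;\qq)$ singly generated forces $M$ to be rationally equivalent to one of $\s^n$, $\cc\pp^n$, $\hh\pp^n$ or $\oo\pp^2$. In each case the minimal model has one or two generators, and I would enumerate all KS extensions whose total cohomology matches. Only sphere fibrations of Hopf type and their projective analogues arise, and the inequality can be verified in each case by direct inspection. The second bullet additionally requires handling the Wallach flag manifolds $\SU(3)/T^2$, $\Sp(3)/\Sp(1)^3$ and $\F_4/\Spin(8)$; these are $F_0$-spaces with well-understood rational type, and again a classification of the KS extensions---now with three pairs of generators---produces a short list of possibilities in which the inequality can be checked by hand.

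For the third bullet, the symmetry-rank hypothesis permits the iterative use of torus-rank techniques in positive curvature, most notably Wilking's connectedness principle combined with periodicity results for positively curved manifolds with torus symmetry. The logarithmic form of the bound $2\log_{2} n+6$ is precisely what is needed to apply such an argument a bounded number of times and ultimately to force the rational type of $M$ to coincide with that of a sphere or a complex projective space, thereby reducing to the first bullet.

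The principal obstacle lies in this third case, where the interplay of symmetry, positive curvature, and the rationally hermitian-symmetric hypothesis must be carefully orchestrated. The first two cases reduce to a finite---if intricate---analysis of KS extensions, whereas the third requires genuine geometric input from the theory of positively curved manifolds with large torus symmetry.
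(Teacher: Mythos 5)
Your overall framework (pass to the Koszul--Sullivan model of the fibration, use ellipticity of $F$ and $B$, and reduce the third bullet to a classification via torus symmetry) matches the paper's, but two of your specific reductions are false as stated. First, in the singly generated case you claim $M$ must be rationally $\s^n$, $\cc\pp^n$, $\hh\pp^n$ or $\oo\pp^2$. This is not true: a singly generated rational cohomology algebra is a truncated polynomial algebra $\qq[u]/(u^{k+1})$ with $\deg u$ an arbitrary even integer, and the paper explicitly cites Su's $32$-dimensional rational analogue of a projective plane as a manifold of this type that is not a projective space. So there is no finite list of rational types to enumerate; the argument must treat the general two-generator minimal model $(\Lambda\langle u,u'\rangle,\dif u'=u^{k+1})$, and in the odd-sphere case the Leray--Serre spectral sequence need not degenerate, so one has to analyze the contractible part of the KS extension directly (this is what the paper does). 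Relatedly, your opening assertion that ellipticity of $M$ passes to $F$ and $B$ is not automatic; it requires Halperin's theorem on fibrations with elliptic total space and fiber of polynomial Betti-number growth, which you should invoke explicitly.

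Second, in the hermitian symmetric case the symmetry-rank hypothesis does \emph{not} force $M$ to be rationally a sphere or a complex projective space. The periodicity argument (one application of the logarithmic-symmetry periodicity theorem, giving $4$-periodicity through degree $14$, compared against the first Betti numbers of the irreducible hermitian symmetric spaces) leaves two possibilities: $\cc\pp^{n}$ and the quadric $\SO(n+2)/(\U(1)\times\SO(n))$. For $n$ even the quadric has $b_n=2$ and two algebra generators, so it does not reduce to your first bullet; it requires a separate case analysis of the possible KS extensions of its four-generator minimal model (the paper carries this out by showing the Hard-Lefschetz class must live in the fiber and then excluding the remaining nontrivial splittings). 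As written, your proposal omits this case entirely, and it is the genuinely new content of the third bullet.
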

Note that in the last item in the theorem, we do not require that there is a positively curved hermitian metric on $M$. This theorem is proved in Theorems \ref{theo05}, \ref{theo06}, \ref{theo03}. Compare \cite[Theorem 1, p.~202]{Bro62} for a similar result on spheres. See \cite{Su14} for a $32$-manifold the rational cohomology of which is singly generated, although it is not a projective space.

Recall that the known even-dimensional simply-connected positively curved examples which do not have singly generated rational cohomology are the flag manifolds in dimensions $6$, $12$, $24$ and the Eschenburg biquotient $\biq{\SU(3)}{T^2}$.

The second conjecture we want to address is the famous conjecture by Halperin stating that given a fibration with fiber being rationally elliptic of positive Euler characteristic, then the Leray--Serre spectral sequence of the fibration degenerates at the $E_2$-term. Several reformulations of this statement are known.

Let us state a corollary of a more technical statement.
\begin{maincor}[positively curved Halperin conjecture with symmetry] \label{corC}
Let $M^n$ be a rationally elliptic closed manifold with positive sectional curvature and symmetry rank greater than
$2\log_2 n + n/8$.
Then $M$ satisfies the Halperin conjecture.
\end{maincor}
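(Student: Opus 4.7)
The plan is to use the large symmetry rank together with positive curvature and rational ellipticity to constrain the rational homotopy type of $M$ enough that the Halperin conjecture becomes a check against a short list of rational models, for which it is already known.

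First I would feed the symmetry rank hypothesis $r > 2\log_2 n + n/8$ into the main technical structural theorem of the paper (of which this corollary is a consequence). The $n/8$ summand is in the range where the classical results on torus actions on positively curved manifolds produce rigidity---one thinks of the Grove--Searle maximal and Wilking half-maximal symmetry rank theorems together with Wilking's connectedness principle---while the $2\log_2 n$ summand is the standard budget for an iterative argument that descends through fixed-point components of corank-one subtori, using the $F_0$ condition to control the rational homotopy type at each step. The expected output is that $M$ is rationally equivalent to one of the known simply-connected positively curved examples: a rank-one symmetric space, a hermitian symmetric space of compact type, one of the exceptional flag manifolds in dimension $6$, $12$ or $24$, or the Eschenburg biquotient $\biq{\SU(3)}{T^2}$.

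Second, since the Halperin conjecture depends only on the rational homotopy type of the fiber, it suffices to verify it on each model in that short list. Spaces with singly generated rational cohomology satisfy Halperin trivially. Homogeneous spaces $G/H$ with $\rk G = \rk H$ satisfy it by classical results, which covers the hermitian symmetric spaces and the exceptional flag manifolds among the known positively curved examples. The remaining case of $\biq{\SU(3)}{T^2}$ is handled by a direct Sullivan-model computation.

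The principal obstacle is the first step. Positive curvature on its own gives only partial information, and rational ellipticity is a relatively weak algebraic constraint; it is their combined interaction with a sufficiently large torus action that produces rigidity. The delicate technical point is that the iterative descent must terminate with a space that is still controllable: the fixed-point components must retain enough torus symmetry, enough positive curvature via totally geodesic inclusion, and the $F_0$ property transferred through the equivariant setup, for the induction to close. The logarithmic term $2\log_2 n$ is precisely calibrated to this iterative loss, while the linear term $n/8$ is what anchors the base case of the induction in the regime where the classical rigidity theorems for torus actions on positively curved manifolds apply.
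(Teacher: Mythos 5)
Your second step is sound in outline, but your first step overclaims in a way that breaks the argument. There is no rational classification of positively curved manifolds with symmetry rank $>2\log_2 n + n/8$; that bound is far below the thresholds of Grove--Searle (roughly $n/2$) and Wilking's half-maximal theorem (roughly $n/4$), and what one actually gets in this regime (via Kennard's periodicity results) is only control of the cohomology in a range of low degrees, namely $4$-periodicity up to degree roughly $n/4$. So the ``expected output'' of your first step --- that $M$ is rationally one of the known positively curved examples --- is not available, and the rest of your argument has nothing to stand on.

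What the paper's structural result (Theorem \ref{theoD} with $k=4$) actually delivers is much weaker but still sufficient: if $\chi(M)>0$, the rational cohomology algebra of $M$ has at most $4$ generators, and in the $4$-generator case the minimal model of $M$ fibers over an $F_0$-algebra with fiber the minimal model of $\s^2$. This is extracted from the periodicity statement by a degree count in a pure model, not from any classification. The corollary then follows from two purely rational-homotopy-theoretic inputs that you did not invoke: Lupton's theorem (Theorem \ref{theo07}), that an $F_0$-space whose cohomology has at most $3$ generators satisfies Halperin, which disposes of the case $l\leq 3$ and also of the base of the fibration in the case $l=4$; and Markl's theorem, that the Halperin property passes from fiber and base to the total space of such a fibration. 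If you want to salvage your plan, replace ``rationally one of the known examples'' by ``cohomology algebra with few generators'' and route the verification through Lupton and Markl rather than through a case-by-case check of concrete models.
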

We remark that due to Wilking (see \cite[Theorem 5]{Wil03}, \cite[Lemma 4.2, p.~16]{AK13}) one sees directly that the same result holds under $n/6+2\log_2(n)-3$ symmetry rank. (Indeed, in this case the rational cohomology algebra is generated by at most two elements---cf.~Theorem \ref{theo07}.)

Our main tool to prove this result is the following theorem. Indeed, it effectively restricts the number of generators of the cohomology algebra.
Theorem \ref{theoD} improves some of the main theorems in \cite{AK13} in the special case of rationally elliptic manifolds.
\begin{main}\label{theoD}
Let $M^{n}$ be rationally elliptic of positive curvature and with symmetry rank at least
$2\log_2 n+n/(2k)$
for $k\geq 2$. We obtain that
\begin{align*}
0\leq \chi(M)\leq 2^{k-2} ( n/k + 1)
\end{align*}
and, if $\chi(M)>0$ the number of generators $l$ of the rational cohomology algebra $H^*(M;\qq)$ is restricted by
$1\leq l \leq k$.
\end{main}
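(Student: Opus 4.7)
The statement has three parts: the lower bound $\chi(M)\geq 0$, the upper bound $\chi(M)\leq 2^{k-2}(n/k+1)$, and (when $\chi(M)>0$) the algebra generator bound $l\leq k$. The lower bound is a classical consequence of rational ellipticity due to Friedlander--Halperin: the Euler characteristic of a simply-connected rationally elliptic space is always non-negative, and vanishes precisely when some odd Betti number is non-zero.

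The central technical step is the algebra generator estimate $l\leq k$. My plan is to combine the symmetry rank hypothesis $r\geq 2\log_2 n+n/(2k)$ with Wilking's connectedness principle applied to fixed-point components of suitable subtori of the isometry group. The $2\log_2 n$ summand guarantees the existence of sufficiently many involutions (or subcircles) in the maximal torus such that some fixed-point component has positive curvature and the restriction map on cohomology is under control; the $n/(2k)$ summand bounds the codimension of such a fixed-point component by $n/(2k)$, so that Wilking's theorem makes the inclusion $(n-n/k+1)$-connected. Iterating this construction at most $k$ times along a chain of fixed-point components in the spirit of \cite{AK13}, and exploiting the rigidity of the minimal model in the rationally elliptic situation, one shows that every new cohomology generator of $M$ is already visible at some stage of the chain. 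This yields at most $k$ algebra generators of $H^*(M;\qq)$.

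Given $l\leq k$ and $\chi(M)>0$, the manifold $M$ is positively elliptic, so its rational cohomology is a complete intersection $\qq[x_1,\dots,x_l]/(f_1,\dots,f_l)$ where the $|x_i|$ are even and $(f_1,\dots,f_l)$ is a regular sequence with $|f_j|=|y_j|+1$ for the odd minimal model generators $y_j$. The Euler characteristic equals $\chi(M)=\prod_j |f_j|/\prod_i |x_i|$ and the formal dimension satisfies $\sum_j |f_j|-\sum_i |x_i|=n$. Writing $\chi=\prod_i(1+s_i/|x_i|)$ with $s_i=|f_i|-|x_i|\geq 2$ and $\sum_i s_i=n$, and using the standard bounds $|x_i|\geq 2$ together with the Friedlander--Halperin degree inequalities for $F_0$-spaces, the estimate $\chi\leq 2^{k-2}(n/k+1)$ reduces to a constrained combinatorial optimization. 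The extremal configuration is essentially obtained by concentrating the excess in a single generator, with the remaining $k-1$ factors contributing a factor of at most $2$ each—this accounts for the prefactor $2^{k-2}$ and the linear term $n/k+1$.

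The main obstacle is the generator bound, which requires careful tracking of codimensions along the chain of iterated fixed-point components and a precise use of the symmetry rank hypothesis to ensure the chain has length $k$. The Euler characteristic optimization, by contrast, is an elementary exercise once $l\leq k$ and the complete-intersection description of $H^*(M;\qq)$ are available.
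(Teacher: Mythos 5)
Your overall architecture (symmetry $\Rightarrow$ connectedness/periodicity, then pure-model degree arithmetic, then an optimization for $\chi$) points in the right direction, but both of your central steps have genuine gaps where the paper's actual mechanism is missing. For the generator bound, the claim that ``every new cohomology generator is already visible at some stage of the chain'' of fixed-point components, hence $l\leq k$, is not an argument: high connectivity of inclusions of fixed-point components gives (via Wilking's periodicity theorem, packaged in the cited result of Kennard) that $H^*(M;\qq)$ is $4$-periodic up to degree $c=\floor{(n+4)/k}$ with $b_c\leq 1$, so that $H^{*\leq c}(M;\qq)$ is that of $\cc\pp^\infty$, $\hh\pp^\infty$, or $\s^2\times\hh\pp^\infty$ --- i.e.\ it controls only the generators of degree at most $c$ (there are at most two of them). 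The bound on the \emph{total} number of generators then comes from a completely different source, which your proposal never invokes for this step: the elliptic dimension formula $\sum\deg y_i-\sum\deg x_i=n$ for a pure model together with the ordering $\deg y_i\geq 2\deg x_i$. Every generator of degree $>c$ contributes more than $c\approx n/k$ to this sum, so there can be at most $k-1$ of them (at most $k-2$ in the $\s^2\times\hh\pp$ case), giving $l\leq k$. A chain of length $k$ by itself bounds nothing about $l$.

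The same omission breaks your Euler characteristic estimate. You optimize $\chi=\prod(1+s_i/|x_i|)$ subject to $\sum s_i=n$ using only $|x_i|\geq 2$ and generic Friedlander--Halperin inequalities; but under those constraints alone the product is not bounded by anything linear in $n$ --- formally, $k$ plays no role and a configuration of many degree-$2$ generators with $\deg y_i=2\deg x_i$ gives $\chi$ as large as $2^{l}$ with $l$ up to about $n/4$, i.e.\ exponential in $n$. The linear bound $2^{k-2}(n/k+1)$ is obtained in the paper precisely because the periodicity theorem forces $\deg x_i>c$ for all but at most the first one or two generators, so the constraint set of the optimization is $\deg x_i>n/k$ (plus the exceptional low-degree generators in the $\cc\pp$, $\hh\pp$, $\s^2\times\hh\pp$ cases), and the extremal configuration $\s^2\times\hh\pp^m\times\s^{m_1}\times\cdots\times\s^{m_{k-2}}$ then yields $\chi\leq 2^{k-1}(m+1)\leq 2^{k-2}(n/k+1)$. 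Your description of the extremal shape is right, but without the degree lower bound $\deg x_i>n/k$ the optimization does not close. (Your treatment of $\chi\geq 0$ via Friedlander--Halperin is fine and matches the paper.)
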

In \cite{AK13} we prove that the Euler characteristic is bounded by a constant multiple of $n \log n$ without the assumption of rational ellipticity. Here we obtain a linear bound, which matches the growth rate of $\cc\pp^n$.

Note that in the light of the next theorem, Theorem \ref{theoC}, the condition on positive Euler characteristic is virtually usually satisfied; it is always satisfied for large $n$ (with ``large'' depending on $k$). In this case the structure theory of rationally elliptic spaces yields that odd rational cohomology groups vanish and $\chi(M)=\sum \dim H^*(M)$ so that we get effective bounds on the size of the entire rational cohomology algebra.
\begin{main}[elliptic Hopf conjecture with symmetry]\label{theoC}
Let $M^{2n}$ be a simply-connected rationally elliptic manifold admitting a metric of positive curvature with symmetry rank at least $\log_{4/3} (2n)$. Then the Euler characteristic of $M$ satisfies $\chi(M)\geq 2$.
\end{main}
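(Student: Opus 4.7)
The plan is to reduce the inequality $\chi(M)\geq 2$ to the positivity $\chi(M)>0$, and then establish the latter by induction on $n$ using Berger's theorem, the inheritance of rational ellipticity under circle-fixed sets, and Wilking's connectedness lemma. The reduction is immediate: if $\chi(M)>0$, then rational ellipticity makes $M$ an $F_0$-space, so all odd rational Betti numbers vanish; since $M$ is closed, simply-connected, and hence orientable of dimension $2n$, we have $b_0=b_{2n}=1$, whence $\chi(M)\geq 2$.

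For the induction on $n$, the base case $n=1$ gives $M\cong S^{2}$ with $\chi=2$. For the inductive step, assume $M^{2n}$ has symmetry rank $r\geq\log_{4/3}(2n)$, and first choose a circle $S^{1}$ inside the isometric torus $T^{r}$. By Berger's theorem the fixed set $M^{S^{1}}$ is non-empty, and it decomposes as a disjoint union $\bigsqcup_i F_i$ of closed totally geodesic submanifolds of even codimension. The classical identity $\chi(M)=\chi(M^{S^{1}})=\sum_i\chi(F_i)$ reduces the problem to showing this sum is positive. Each $F_i$ is positively curved (totally geodesic), simply-connected and rationally elliptic (standard for fixed sets of $S^{1}$-actions on elliptic spaces), and carries an action of the quotient torus $T^{r-1}$. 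If the maximum $d:=\max_i\dim F_i$ satisfies $d\leq 3n/2$, then every even-dimensional $F_i$ has effective symmetry rank at least $r-1\geq\log_{4/3}(2n)-1=\log_{4/3}(3n/2)\geq\log_{4/3}(\dim F_i)$, so by induction $\chi(F_i)\geq 2$; odd-dimensional $F_i$ contribute $\chi=0$, and the sum is positive.

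If instead $d>3n/2$, the maximal component $F$ has codimension less than $n/2$ in $M$, and Wilking's connectedness lemma makes $F\hookrightarrow M$ at least $(n+1)$-connected. Applying the inductive hypothesis to $F$ (which satisfies all required assumptions with rank $\geq r-1\geq\log_{4/3}(\dim F)$) gives $\chi(F)\geq 2$, hence all odd Betti numbers of $F$ vanish; through degree $n$ these vanish on $M$ as well by the connectedness, and for odd $i>n$ Poincar\'e duality on $M^{2n}$ together with $2n-i<n$ forces $b_i(M)=b_{2n-i}(M)=b_{2n-i}(F)=0$, so $\chi(M)\geq 2$. The hard part will be the symmetry-rank bookkeeping on the fixed-point components: the induced $T^{r-1}$-action may be ineffective with a positive-dimensional kernel, which could in principle erode the effective rank below the $\log_{4/3}(\dim F_i)$ threshold needed for induction. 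Controlling this requires either a refined choice of $S^{1}$ (for instance as a subcircle of the principal isotropy) or an iterative argument ensuring the effective rank drops by exactly one while the dimension drops by a factor of at most $3/4$, precisely matching the $\log_{4/3}$ loss dictated by the hypothesis.
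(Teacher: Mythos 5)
Your reduction to $\chi(M)>0$ and the broad inductive framework (circle fixed-point sets, rational ellipticity of fixed-point components, the connectedness lemma) are in the same spirit as the paper's argument, but the proposal has two genuine gaps. The first is the one you flag yourself, and it is not deferrable bookkeeping: the kernel of the torus action on a fixed-point component of $S^1$ can have dimension much larger than one, so the effective rank on $F_i$ may fall well below $\log_{4/3}(\dim F_i)$, and no ``refined choice of $S^1$'' is known to prevent this in general. The paper resolves precisely this point by importing the isotropy-representation analysis of \cite{AK13}: one works with involutions $\iota\in\Z_2^{\rk T}$ and only ever inducts on components $M^\iota_x$ with $\dk(M^\iota_x)\leq 1$ and codimension in $(n/4,n/2]$ (Case~2), passing if necessary to a sub-circle's fixed-point set to force $\dk=1$; if no such components exist at some fixed point (Case~1), one does not induct at all but instead concludes that $H^*(M)$ is $4$-periodic and applies Lemma~\ref{lemma01}.

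The second gap is quantitative and sinks your large-component branch outright. If $\dim F>3n/2$, then $\log_{4/3}(\dim F)>\log_{4/3}(3n/2)=\log_{4/3}(2n)-1$, whereas the rank available on $F$ is at most $r-1$, and $r-1\geq\log_{4/3}(2n)-1$ is all the hypothesis gives you; so the claimed inequality $r-1\geq\log_{4/3}(\dim F)$ goes the wrong way, and the inductive hypothesis does not apply to $F$ exactly because $F$ is too large relative to the rank that survives. This is why the paper treats the small-codimension situation by Wilking-type periodicity (leading to $4$-periodic cohomology and Lemma~\ref{lemma01}) rather than by induction. Two smaller issues: the base of the induction must cover all dimensions up to roughly $48$ (where $\log_{4/3}$ of the dimension exceeds Wilking's threshold $n/4+1$ and the Grove--Searle/Wilking classifications apply), not just $n=1$, since the codimension dichotomy needs the dimension to be large; and simple connectivity of the components $F_i$ to which you apply the theorem is not automatic and must come from the connectedness lemma, i.e.\ from the codimension control that your branch~1 does not provide.
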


Due to \cite{Ken13} the Hopf conjecture is known under logarithmic symmetry given that the dimension of the manifold is divisible by $4$. In \cite{AK13} we proved the Hopf conjecture for arbitrary dimensions, logarithmic symmetry rank and under the assumption that one of the Betti numbers $b_2$, $b_3$ or $b_4$ vanishes. The previous theorem is a general version for all dimensions of rationally elliptic spaces with logarithmic symmetry.

\vspace{3mm}

\textbf{As a general convention all cohomology algebras are taken with rational coefficients (unless stated differently). All cochain algebras are over the rationals. All manifolds are closed.}

\vspace{3mm}

\str In Section 1 we provide necessary background knowledge and we illustrate the problems under consideration. The following sections are devoted to the proofs of the respective theorems and corollaries. In Section \ref{sec05} we prove Theorem \ref{theoD} and Corollary \ref{corC}; in Section \ref{sec8} we give the proof of Theorem \ref{theoC}. The next section, Section \ref{sec02}, is devoted to formulating several problems on the structure of positively curved manifolds and to reconciling them with the conjecture by Wilhelm. We then prove Theorem \ref{theoE}.

\section{Preliminaries}\label{sec01}

As mentioned in the introduction, a prominent conjecture in the field of nonnegative curvature is
\begin{conj*}[Bott]
An (almost) non-negatively curved manifold is rationally elliptic.
\end{conj*}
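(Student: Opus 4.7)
The Bott conjecture remains a central open problem in Riemannian geometry, so no complete strategy is known; what follows is a plausible framework rather than a proof. The plan is to turn the curvature hypothesis into an obstruction against exponential growth of rational homotopy, and then invoke the dichotomy theorem of F\'elix--Halperin: a simply-connected finite CW complex $M$ is rationally elliptic precisely when $\dim(\pi_k(M)\otimes\qq)$ is bounded by a polynomial in $k$, and is otherwise of exponential growth. Thus it would suffice to rule out the hyperbolic alternative under an almost non-negative lower sectional curvature bound.

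First I would reduce to the simply-connected case. By Kapovitch--Wilking the fundamental group of an almost non-negatively curved closed manifold is virtually abelian, so a finite cover is (up to a splitting of a flat factor via Cheeger--Gromoll) a torus bundle over a simply-connected almost non-negatively curved base, and rational ellipticity descends through such torus bundles. Next I would translate rational homotopy growth of $M$ into growth of the Betti numbers of the free loop space $\Lambda M$, using the equivalence due to Sullivan and Vigu\'e-Poirrier between polynomial growth of $\dim H^*(\Lambda M;\qq)$ and rational ellipticity of $M$. On the geometric side, a curvature lower bound combined with a diameter bound constrains the number of free homotopy classes represented by geodesic loops of length at most $L$; coupling this count with Morse theory on the energy functional on $\Lambda M$, one would try to extract polynomial growth of the loop space rational homology and close the loop.

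The hard part---indeed the core reason the conjecture is open---is precisely this final coupling. No available theorem converts a lower sectional curvature bound alone into a polynomial bound on $\dim H^*(\Lambda M;\qq)$: Gromov's total Betti number estimate requires an upper curvature bound, whereas the almost non-negative hypothesis furnishes only a lower one. All current progress, including the theorems of the present article, demands additional structural input such as large torus symmetry, low cohomogeneity or strong topological hypotheses. A general proof would therefore require either a new comparison theorem on the free loop space that uses only lower curvature bounds, or a fundamentally different mechanism linking the local geometry of $M$ to the global algebraic structure of its minimal model; bridging this gap between the large-scale geometric consequences of a lower curvature bound and the purely algebraic ellipticity condition is where the difficulty of the Bott conjecture resides.
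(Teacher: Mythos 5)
The statement you were given is not a theorem of the paper but the Bott conjecture itself, which the authors state (following Grove--Halperin) as a well-known open problem and never prove; there is therefore no proof in the paper to compare against, and your refusal to manufacture one is the correct response. Your sketch of why it is open --- the elliptic/hyperbolic dichotomy, the loop-space reformulation, and the absence of any comparison theorem converting a lower curvature bound into polynomial growth of $\dim H^*(\Lambda M;\qq)$ --- is a fair summary of the state of the art, with one small inaccuracy: Kapovitch--Wilking give virtually \emph{nilpotent} (not virtually abelian) fundamental groups for almost non-negatively curved manifolds.
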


Recall that a simply-connected topological space $X$ is called \emph{rationally elliptic}, if $\dim \pi_*(X)\otimes \qq$ is finite and $\dim H^*(X,\qq)$ is finite. In other words, the first condition means that from some degree on all the rational homotopy groups of $X$ vanish. Obviously, the second condition is trivial on manifolds.

Rationally elliptic spaces satisfy several relations on the degrees of their rational homotopy groups, which we shall make use of in the course of this paper. See \cite[p.~434]{FHT01} for a summary of these.

There is another classical conjecture which combines nicely with the Bott conjecture.
\begin{conj*}[Hopf]
A positively curved manifold of even dimension has positive Euler characteristic.
\end{conj*}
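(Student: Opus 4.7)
The Hopf conjecture is a famous open problem, so any plan must be speculative; mine would adopt the rational-homotopy perspective emphasized in this paper and split the difficulty into two parts. The first task is to establish the Bott conjecture, that a positively curved manifold is rationally elliptic, and the second is to upgrade rational ellipticity to the $F_0$ condition, namely $\chi(M)>0$. The structure theory of elliptic spaces makes the second task equivalent to showing that the odd rational Betti numbers of $M$ all vanish, since for a rationally elliptic space one has $\chi(M)\geq 0$ with equality precisely when the odd rational cohomology is non-trivial.

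Before pursuing the topological route I would briefly inspect the direct curvature approach. In dimension two, Gauss--Bonnet settles the claim. In higher even dimensions Chern's theorem expresses $\chi(M)$ as the integral of a universal polynomial (the Pfaffian) in the curvature, and one hopes the integrand is pointwise non-negative whenever the sectional curvature is positive. This works through dimension four but fails from dimension six on by the examples of Geroch and others, so no naive curvature-to-integrand argument can resolve the conjecture; I would set this path aside.

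The substantive plan is then to push the Grove symmetry program as far as possible: use Synge's theorem (orientability in even dimensions of positive curvature), Wilking's connectedness principle, and the inductive analysis of torus fixed-point components developed by the authors in \cite{AK13} to reduce manifolds of sufficiently large symmetry rank to a short list of rational models, all of which are $F_0$-spaces. Theorem \ref{theoC} above is an instance of exactly this strategy, carried out under the additional hypothesis of rational ellipticity; one would next try to iterate the induction past the logarithmic threshold and, in parallel, use Frankel-type intersection results for totally geodesic submanifolds together with curvature-controlled bounds on the Hurewicz map to rule out odd-degree rational cohomology in the elliptic case.

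The main obstacle, and the reason the full conjecture remains open, is that there is no known mechanism to pass from an arbitrary positively curved metric either to one admitting a large isometric torus action or directly to a statement of rational ellipticity. Any honest proof would have to supply such a curvature-to-topology bridge; without a new input of this kind I would not expect to get beyond the conditional advances of Theorems \ref{theoC} and \ref{theoD} proved here.
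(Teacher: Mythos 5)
This statement is an open conjecture in the paper, not a theorem: the authors give no proof of it, and your proposal correctly recognizes that none is available. Your outlined strategy --- reduce to showing odd rational Betti numbers vanish via the structure theory of elliptic spaces, and attack this conditionally through the Grove symmetry program and torus fixed-point induction --- is exactly the route the paper actually takes in its partial results (Theorem \ref{theoC} and Lemma \ref{lemma01}), so there is nothing to fault beyond the unavoidable fact that no bridge from bare positive curvature to ellipticity or symmetry is known.
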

Recall that a space $X$ is \emph{positively elliptic} or $F_0$, if it is rationally elliptic and has positive Euler-characteristic. These positively elliptic spaces have very strict properties; in particular, their odd Betti numbers vanish.

The Bott conjecture is trivial for homogeneous spaces (respectively biquotients) and it is known for manifolds of cohomogeneity one and two (see \cite{GH87} and \cite{Yea12}). For some confirmations of the Hopf conjecture under symmetry see \cite{Ken13}, \cite{AK13} and references therein.

The Bott and Hopf conjectures combine nicely to speculating that an even dimensional manifold of positive curvature is an $F_0$-space. For $F_0$-spaces there is the \emph{Halperin conjecture}, which is one of the central conjectures in Rational Homotopy Theory.
\begin{conj*}[Halperin]
Let $F\hto{j} E \to B$ be a fibration of simply-connected spaces with $F$ being $F_0$. Then the fibration is \emph{rationally totally non-\linebreak[4]cohomologous to zero}, i.e.~the induced homomorphism $j^*\co H^*(E;\qq)\to H^*(F;\qq)$ is surjective.
\end{conj*}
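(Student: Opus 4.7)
The plan is to reduce the surjectivity of $j^*$ to a purely algebraic statement about $H^*(F;\qq)$ and then to verify that statement for every $F_0$-space. First I recall that for a simply-connected fibration, surjectivity of $j^*$ is equivalent to the collapse of the Leray--Serre spectral sequence at the $E_2$-page (the two statements are interchangeable by the multiplicative structure of the spectral sequence). Passing to relative minimal Sullivan models I replace the fibration by a Koszul--Sullivan extension $(\Lambda W, d_B) \to (\Lambda W \otimes \Lambda V_F, D) \to (\Lambda V_F, d_F)$ in which $D$ agrees with $d_F$ modulo the ideal generated by $W$. Every non-trivial differential in the spectral sequence then corresponds to a negative-degree graded derivation of $H^*(F;\qq)$ produced by the perturbation $D - d_F$, so the problem reduces to showing that $\Der^{-k}(H^*(F;\qq))$ vanishes for every $k > 0$ and every $F_0$-space $F$.

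Next I would exploit the pure minimal model of an $F_0$-space: $F$ has model $(\Lambda V^{\text{even}} \otimes \Lambda V^{\text{odd}}, d)$ with $d|_{V^{\text{even}}} = 0$ and $d(V^{\text{odd}}) \subseteq \Lambda V^{\text{even}}$. Since $\chi(F) > 0$ forces $\dim V^{\text{even}} = \dim V^{\text{odd}} =: r$ and turns $d(V^{\text{odd}})$ into a regular sequence, the rational cohomology is a graded complete intersection $H^*(F;\qq) \cong \qq[x_1,\ldots,x_r]/(f_1,\ldots,f_r)$. The target is therefore the commutative-algebraic statement that such a ring admits no non-trivial graded derivation of negative degree.

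The hard part is precisely this last step, which is in fact Halperin's original algebraic reformulation and remains an outstanding open problem in rational homotopy theory. A natural route is to realize $\Der^{<0}$ inside the tangent module of the complete intersection and to bound its graded pieces by Jacobian-type criteria; this succeeds in low rank $r \leq 2$, for cohomologies generated in at most two degrees, and for Chern- or Weyl-invariant relations (covering flag manifolds and equal-rank homogeneous spaces $G/H$), but no general argument is known. Any unconditional proof appears to require new structural input, which is why the present paper establishes Halperin's conjecture only under the supplementary hypotheses of positive curvature and sufficient torus symmetry (Corollary \ref{corC}) rather than in full generality.
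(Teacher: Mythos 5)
The statement you were given is the Halperin conjecture itself, which the paper records as an open conjecture and does not prove; it only establishes special cases under supplementary curvature and symmetry hypotheses (Corollary \ref{corC}, via Theorem \ref{theoD} and Lupton's three-generator theorem, Theorem \ref{theo07}). Your reduction is correct and is exactly the reformulation the paper itself relies on: surjectivity of $j^*$ is equivalent to collapse of the rational Leray--Serre spectral sequence at $E_2$, and by Meier's theorem (quoted in the paper as Theorem \ref{theo08}) this is equivalent to the vanishing of all negative-degree derivations of $H^*(F;\qq)$, which for an $F_0$-space is a graded complete intersection $\qq[x_1,\ldots,x_r]/(f_1,\ldots,f_r)$ since purity and $\chi(F)>0$ force $\dim V^{\even}=\dim V^{\odd}=r$ with $d(V^{\odd})$ a regular sequence.

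The gap you flag at the end is not a repairable omission in your argument — it is the entire open content of the conjecture. No general proof of the vanishing of $\Der^{<0}$ for graded complete intersections is known; the cases you list ($r\leq 2$, and in fact $r\leq 3$ by Lupton, generators in a single degree, homogeneous spaces, Hard-Lefschetz manifolds) exhaust the established territory cited in Section \ref{sec01}. Since the paper likewise offers no proof of the general statement, your proposal should be read as a correct account of the standard reduction together with an accurate acknowledgement that the statement cannot currently be proved, rather than as a proof.
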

Equivalently, the rational Leray-Serre spectral sequence degenerates at the
$E_2$-term or the cohomology module of the total space splits as the product of fiber and base cohomologies.

Several attempts have been made to confirm this
conjecture and various special cases have been established
(cf.~\cite{FHT01}). Just a few of them: The conjecture holds true,
if the cohomology algebra $H^*(F;\qq)$ has at most $3$ generators
(cf.~\cite{Tho81}, \cite{Lup90}) (see below), if all the generators are of the
same degree (cf.~\cite{SZ75}, \cite{SZ75a}), in the ``generic case''
(cf.~\cite{PP96}), on homogeneous spaces (cf.~\cite{ST87}) or if the
manifold $M^{2n}$ admits a Hard-Lefschetz structure
\begin{align*}
H^{n-k}(M,\rr) &\xto{\cong} H^{n+k}(M,\rr)\\
x&\mapsto \omega^k\cdot x
\end{align*}
(for a $2$-form $\omega$) such as K\"ahler manifolds do. This last
result was established/recalled in the article
\cite{Mei83}---cf.~\cite{Bla56} for the original statement. It
relies on the article \cite{Mei82} by Meier where he gives a very
nice reformulation---which we shall draw on---of the Halperin conjecture in terms of
self-homotopy equivalences of $F$ or via negative-degree derivations
on the rational cohomology algebra.

The importance of the conjecture surpasses the theory-internal
interest. So, for example, the same problem appears in the
deformation of singularities (see \cite{Wah83}) or in the existence question of nonnegatively curved metrics (see \cite{BK03}).

We shall need the following reformulation of the Halperin conjecture (see \cite[Theorem A, p.~329]{Mei82}).
\begin{theo}[Meier]\label{theo08}
An $F_0$-space satisfies the Halperin conjecture if and only if its
rational cohomology algebra does not permit non-trivial derivations
of negative degree.
\end{theo}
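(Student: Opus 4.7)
The plan is to follow Meier's strategy from \cite{Mei82}, translating the Halperin conjecture---which is a statement about every fibration with fiber $F$---into a single algebraic condition on the minimal Sullivan model of $F$, and finally into the stated condition on derivations of $H^{*}(F;\qq)$. The two main ingredients are the universal $F$-fibration and the Sullivan--Schlessinger--Stasheff identification of the rational homotopy of $\aut_{1}(F)$ with the homology of a derivation complex.

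First I would reduce to the universal setting. Every simply-connected fibration with fiber $F$ is pulled back from the universal $F$-fibration $F \hookrightarrow \mathrm{Baut}_{\bullet}(F) \to \mathrm{Baut}_{1}(F)$, and the TNCZ property is stable under pullback. Hence the Halperin conjecture for $F$ is equivalent to the single statement that the universal $F$-fibration is rationally TNCZ.

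Next I would invoke Sullivan's description of the rational homotopy of the monoid of self-equivalences. If $(\Lambda V, d)$ is the minimal Sullivan model of $F$, then $\pi_{k}(\aut_{1}(F)) \otimes \qq \cong H_{k}(\Der(\Lambda V, d), [d,-])$ in positive degrees, so after shifting $\pi_{*}(\mathrm{Baut}_{1}(F)) \otimes \qq \cong H_{*-1}(\Der(\Lambda V, d))$. Since $F$ is $F_{0}$, the minimal model is pure, with $V = V^{\even} \oplus V^{\odd}$ of equal dimension; denote the even generators by $x_{i}$ and the odd generators by $y_{j}$, so that $d x_{i} = 0$ and $d y_{j} = f_{j}(x_{1},\dots,x_{r})$ for a regular sequence, and $H^{*}(F;\qq) \cong \qq[x_{1},\dots,x_{r}]/(f_{1},\dots,f_{r})$.

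The crux of the argument is a dictionary between negative-degree derivations of $H^{*}(F;\qq)$ and classes in $H_{*}(\Der(\Lambda V, d))$ of odd total degree. Given a derivation $\theta$ of degree $-k<0$ on $H^{*}(F;\qq)$, the Leibniz compatibility $\sum_{i}(\partial f_{j}/\partial x_{i})\,\theta(x_{i}) = 0$ allows one to lift the assignments $x_{i} \mapsto \theta(x_{i})$ (interpreted in $\Lambda V^{\even}$) to a $d$-cocycle derivation of $(\Lambda V, d)$ by defining appropriate values on the odd generators $y_{j}$; non-triviality is preserved by this lift. Conversely, any nontrivial cycle in the relevant degree of $\Der(\Lambda V, d)$ descends, via the quotient $(\Lambda V, d) \to H^{*}(F;\qq)$, to a nontrivial negative-degree derivation of the cohomology. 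Combining the two reductions with the standard fact that TNCZ of the universal fibration over an $F_{0}$-fiber is equivalent to the vanishing of $\pi_{\odd}(\mathrm{Baut}_{1}(F)) \otimes \qq$ yields the theorem. The principal obstacle is exactly this dictionary: outside of the $F_{0}$-setting one cannot carry out such an explicit lift through a pure minimal model, so the proof uses the $F_{0}$-hypothesis precisely at this step.
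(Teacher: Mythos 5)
The paper does not prove this statement at all: it is quoted as Meier's Theorem~A with the citation \cite[Theorem A, p.~329]{Mei82} and used as a black box, so there is no internal argument to compare yours against. What you have written is, in outline, Meier's own proof: reduce to the rationalized universal fibration, identify $\pi_*(\aut_1(F))\otimes\qq$ with the homology of the derivation complex of the minimal model, and translate between that homology and derivations of $H^*(F;\qq)$ via a pure model of the $F_0$-space.

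Two steps in your sketch carry real content that is currently asserted rather than proved. First, in the equivalence between the Halperin property and the vanishing of $\pi_{\odd}(\mathrm{Baut}_1(F))\otimes\qq$, the implication from a non-trivial odd homotopy class to a non-TNCZ fibration requires realizing that class by a fibration over an odd sphere and checking that the associated derivation genuinely appears as a differential in the Serre spectral sequence; this realization is part of the work. Second, and more seriously, the ``descent'' half of your dictionary is the delicate point: a $[d,-]$-cocycle in $\Der(\Lambda V,d)$ representing a non-zero homology class can, for a general space, induce the zero derivation on $H^*(F;\qq)$, so non-triviality does \emph{not} automatically pass to cohomology. Meier's key lemma is precisely that for a pure $F_0$-model the natural map from $H_{\even}(\Der(\Lambda V,d))$ (in positive degrees) to the negative-degree derivations of $H^*(F;\qq)$ is injective---this uses that the $dy_j$ form a regular sequence---and without it the chain of equivalences does not close in the direction ``no negative derivations $\Rightarrow$ Halperin.'' You correctly isolate the lifting direction as the place where the $F_0$-hypothesis enters, but the injectivity of the descent is the other half of that same crux and needs an argument. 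With that lemma supplied, your proposal is Meier's proof.
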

From this characterization it follows easily that once the cohomology algebra is generated by one element only, it satisfies the Halperin conjecture. In fact, using this characterization the subsequent Theorem was proved by Lupton in \cite{Lup90}.
\begin{theo}\label{theo07}
The Halperin conjecture holds for an $F_0$-space $M$, if the rational cohomology algebra of $M$ has at most $3$ generators.
\end{theo}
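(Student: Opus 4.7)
The plan is to reduce via Meier's reformulation (Theorem \ref{theo08}) to a purely algebraic statement: it suffices to show that, under the three-generator hypothesis, the rational cohomology algebra $A := H^*(M;\qq)$ admits no non-trivial derivation of strictly negative degree. Because $M$ is an $F_0$-space, the pure model gives a presentation $A \cong R/I$, where $R = \qq[x_1,\ldots,x_n]$ with $n \leq 3$ positive even-degree generators $x_i$ and $I = (f_1,\ldots,f_n)$ is generated by a regular sequence $(f_1,\ldots,f_n)$ in $R$; see \cite{FHT01}. Let $\theta$ be a derivation of $A$ of degree $-d < 0$; the aim is to show $\theta \equiv 0$.

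First I would dispose of the singly-generated case $n = 1$: by homogeneity $A = \qq[x]/(x^{N+1})$, so $\theta(x)$ would sit in positive degree strictly less than $|x|$, but every nonzero homogeneous element of $A$ has degree a positive multiple of $|x|$, forcing $\theta(x) = 0$ and hence $\theta \equiv 0$. (This case already handles, via Meier, the first bullet of Theorem \ref{theoE}.) For $n \in \{2,3\}$, the strategy is to lift $\theta$ to a derivation $\tilde\theta$ of $R$ by choosing homogeneous polynomial representatives $g_i := \tilde\theta(x_i)$ of the classes $\theta(x_i)$, with $\deg g_i = |x_i| - d < |x_i|$. The descent condition $\theta([f_j]) = 0$ then becomes
\begin{equation*}
\tilde\theta(f_j) \;=\; \sum_{i=1}^{n} g_i \, \frac{\partial f_j}{\partial x_i} \;\in\; I \qquad \text{for every } j.
\end{equation*}

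The remaining task is to show that this system forces $g_i \in I$ for every $i$, so that $\theta$ vanishes on generators and hence on all of $A$. This is where the regularity of $(f_1,\ldots,f_n)$ enters decisively: the Koszul complex on these elements is a free resolution of $A$ over $R$, and its first-syzygy module is spanned precisely by the standard Koszul relations among the $f_j$. For $n = 2$ only a single Koszul relation is available, and the descent system is elementary to solve. For $n = 3$ one must combine the three Koszul relations with a careful bookkeeping of graded pieces, using the strict inequality $\deg g_i < |x_i|$ to exclude exotic solutions $(g_1,g_2,g_3) \notin I^{\oplus 3}$. The main obstacle I anticipate is exactly this final linear-algebraic step; its combinatorial explosion past three generators is precisely what caps the theorem at $n \leq 3$. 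Once it is completed, Meier's theorem upgrades the derivation statement to the Halperin conjecture for $M$.
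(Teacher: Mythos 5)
First, a point of context: the paper contains no proof of Theorem \ref{theo07}; it is Lupton's theorem, quoted from \cite{Lup90} (the two-generator case going back to \cite{Tho81}). So your proposal has to stand on its own, and as written it does not. Your reduction is the standard and correct one: by Meier's characterization (Theorem \ref{theo08}) it suffices to rule out non-trivial negative-degree derivations of $A=H^*(M;\qq)\cong\qq[x_1,\dots,x_n]/(f_1,\dots,f_n)$ with $(f_j)$ a regular sequence, and a derivation vanishes iff each $g_i=\tilde\theta(x_i)$ lies in $I$. But the entire mathematical content of the theorem is exactly the step you defer: showing that $\sum_i g_i\,\partial f_j/\partial x_i\in I$ for all $j$, together with $\deg g_i<|x_i|$, forces $g_i\in I$. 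You label this ``the main obstacle I anticipate'' and offer only the heuristic that the Koszul syzygies plus ``careful bookkeeping'' will finish it. That is a plan, not a proof, and the heuristic is not obviously the right tool: the descent conditions are congruences modulo $I$, not syzygies among the $f_j$, so knowing that the first syzygy module of a regular sequence is generated by the Koszul relations does not by itself constrain the solutions $(g_1,\dots,g_n)$. What is actually needed is an injectivity statement for the Jacobian matrix $(\partial f_j/\partial x_i)$ acting on low-degree elements modulo $I$, and establishing this for three generators is a substantial case analysis occupying the bulk of \cite{Lup90}; it is precisely the point where the method stops working beyond three generators.

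As it stands, your argument is complete only in the singly generated case, and even there the boundary subcase $\deg\theta=-|x|$ (where $\theta(x)$ could a priori be a nonzero scalar in $A^0$) is not excluded by your degree count alone; one must invoke $\theta(x^{N+1})=0$ versus $x^N\neq 0$. For $n=2$ you assert the system is ``elementary to solve'' without solving it, and for $n=3$ the core is missing entirely. To make this a proof you would either have to carry out Lupton's analysis or cite it, as the paper does.
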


\vspace{3mm}

For an outline of the Wilhelm conjecture see Section \ref{sec02}.

\section{Proof of Theorem \ref{theoD} and of Corollary \ref{corC}}\label{sec05}

We prove Theorem \ref{theoD} and derive Corollary \ref{corC} on the Halperin conjecture as an easy consequence.

Recall in both statements that $M^n$ is a simply connected, closed manifold with even dimension. Assume $M$ admits a Riemannian metric with positive sectional curvature invariant under a torus action of rank at least $\frac{n}{2k} + 2\log_2 n$.

Recall that we wish to show that $\chi(M) \leq 2^{k-2}\left(\frac{n}{k}+1\right)$ and that, if $\chi(M) > 0$, then the number $l$ of generators of $H^*(M;\qq)$ satisfies $l\leq k$ with equality only if the minimal model of $M$ contains the minimal model of $\s^2$ as a sub-differential graded algebra.

\begin{proof}[\textsc{Proof of Theorem \ref{theoD}}]
We first prove the second assertion of the theorem. That is, we assume $\chi(M) > 0$, and we show that the number of generators $l$ of the cohomology algebra $H^*(M;\qq)$ is bounded above by $k$, with equality only if the minimal model $(\Lambda V, \dif)$ for $M$ fits into a rational fibration
	\[(\Lambda\langle e,e'\rangle, \dif e' = e^2) \embedded (\Lambda V, \dif) \to (\Lambda W, \overline{\dif}).\]
over an $F_0$-algebra $(\Lambda W,\bar \dif)$ (due to \cite[Theorem 28.6, p.~375]{FHT01}).

Since $M$ is an $F_0$ space, there exists a pure model \linebreak[4]$(\Lambda\langle x_1,\ldots,x_l,y'_1,\ldots,y'_l\rangle, \dif)$ where $\dif x_i = 0$, $\dif y_i' \in \Lambda\langle x_1,\ldots,x_l\rangle$, the $x_i$ have even degree, and the $y_i'$ have odd degree. Set $y_i = \dif y_i'$. We may reindex such that $\deg x_1 \leq \ldots \leq \deg x_l$ and $\deg y_i \geq 2 \deg x_i$ for all $i$.  For this we point the reader to Remark \ref{xrem}.

Set $c = \floor{\frac{n+4}{k}}$. Observe that the symmetry rank is at least $2 \log_2 n + \frac{c}{2} - 1$. By \cite{Kennard-pre2}, we conclude that $H^*(M)$ is $4$-periodic until degree $c$. In fact, \cite{Kennard-pre2} implies that $b_{c} = b_{c-4} \leq 1$. This implies that in these low degrees the truncated rational cohomology algebra $H^{*\leq c}(\cdot;\Q)$ of $M$ is isomorphic to that of $\cc\pp^\infty$, $\hh\pp^\infty$ or of $\s^2\times \hh\pp^\infty$. In the notation above, we have one of the following four cases:
	\begin{enumerate}
	\item (Sphere case) All $\deg y_i - \deg x_i \geq \deg x_i >c$.
	\item ($\cc\pp$ case) $\deg x_1 = 2$, $\deg y_1 > c $, and all other $\deg y_i - \deg x_i >c$.
	\item ($\hh\pp$ case) $\deg x_1 = 4$, $\deg y_1 >c$, and all other $\deg y_i - \deg x_i >c$.
	\item ($\s^2 \times \hh\pp$ case) $\deg x_1 = 2$, $\deg y_1 = 3$, $\deg x_2 = 4$, $\deg y_2 >c$, and all other $\deg y_i - \deg x_i >c$.
	\end{enumerate}
In the first case, the dimension formula implies
	\[l\pfrac{n+4}{k} < l(c+1) \leq \sum \deg y_i - \sum \deg x_i = n,\]
so $l \leq k-1$. In the second and third cases, one similarly has
	\[(c+1) - 4 + (l-1)(c+1) \leq \deg y_1 - \deg x_1 + \sum_{i=2}^l (\deg y_i - \deg x_i) = n,\]
so
	\[l \leq \frac{n+4}{c+1} < k.\]
In the final case, one has
	\[[4-2] + [(c+1) - 4] + (l-2)(c+1) \leq \sum \deg y_i - \sum \deg x_i = n,\]
so $l \leq k$. As a quick check, we note that these estimates on $l$ can be realized if we do not require $M$ to satisfy the curvature condition. For example, an $n$--dimensional product of $k-1$ spheres of dimensions at least $c + 1 = \floor{\frac{n+4}{k}} + 1$ realizes the bound $l = k -1$ in the first case. Replacing the first spherical factor by a $\cc\pp^m$, $\hh\pp^m$, or $\s^2 \times \hh\pp^m$ of the appropriate dimension shows that the estimate is sharp in these cases as well.

Since $l = k$ only occurs in the $\s^2 \times \hh\pp$ case, and since $(\Lambda\langle x_1,y_1\rangle, \dif)$ is a sub-dga of the minimal model of $M$, this concludes the proof of the first assertion of the theorem.

We proceed to prove the assertion on the Euler characteristic. Due to Theorem \ref{theoC} we obtain that $M$ has positive Euler characteristic if $\log_{4/3} n \leq \frac{n}{2k} + 2\log_2 n$. In any case the Euler charateristic of an elliptic space is always non-negative and it is zero if and only if $b_\odd\neq 0$---which is clearly the case if $M$ is odd-dimensional. Thus for proving the theorem we may assume that $\chi(M)>0$ and $b_\odd=0$, i.e.~$M$ is an $F_0$-space---see \cite[Proposition 32.10, p.~444]{FHT01}. We adopt the notation from above for a pure model $(\Lambda\langle x_1,\ldots,x_l,y_1',\ldots,y_l'\rangle,\dif)$ for $M$. Again write $\dif y_i' = y_i$, and recall that the $x_i$ are of even degree with $\dif x_i = 0$, and the $y_i$ are also of even degree with $\deg y_i \geq 2\deg x_i$.

The Euler characteristic $\chi(M)=\chi(H(\Lambda V,\dif))$ does not depend on the differentials in the minimal model, only on the generators $x_i$ and $y_i'$.
This follows from the formula
\begin{align*}
\chi(M)=\prod_{1\leq i\leq l} \deg y_i/\deg x_i
\end{align*}
from \cite[Proposition 32.15.(iii), p.~448]{FHT01}.

We consider first Case (1), where $M$ is $\floor{n/k}$ connected. The solution to the following optimization problem gives an upper bound for the Euler characteristic:

\begin{align*}
\operatorname{maximize} && \prod_{1\leq i\leq l} \deg y_i/\deg x_i \\
\operatorname{over}		&& l\geq 1, \deg x_i, \deg y_i\\
\operatorname{subject} \operatorname{to}  &&\sum_{1\leq i\leq l} \deg y_i-\deg x_i=n\\
\operatorname{and }&& \deg x_i > n/k, \deg y_i\geq 2 \deg x_i
\end{align*}

We claim that the solution to this problem is $2^{k-1}$. Indeed, this value can be achieved by taking $l = k-1$, $\deg y_i = 2\deg x_i$, and choosing appropriate values of $\deg x_i > n/k$. Topologically, this corresponds to a product of spheres.

To prove that $2^{k-1}$ is the maximum, we regard $\deg x_i$ and $\deg y_i$ are real numbers and prove in this case that the maximum is at most $2^{k-1}$. First, the maximum is achieved along the subset with $\deg y_i = 2 \deg x_i$. Indeed, if some $\deg y_i = 2\deg x_i + \delta$ for some $\delta > 0$, one can replace $\deg y_i$ by $\deg y_i - \frac{\delta}{3}$ and $\deg x_i$ by $\deg x_i + \frac{\delta}{3}$. Second, the product $\prod \deg y_i/\deg x_i = 2^l$, so it suffices to show that $l = k-1$. Indeed, $l \geq k$ is impossible by the constraints, but $l = k - 1$ can be arranged. This concludes the proof in the case where $M$ is $c$--connected.

The proofs in the other three cases are similar. The extremal case is the last case. Here one obtains that $\s^2 \times \hh\pp^m \times \s^{m_1}\times\cdots\times\s^{m_{k-2}}$ achieves the maximum possible value for $\chi(M)$. Subjecting $m$ and the $m_i$ to the constraints, one concludes that $m+1 \leq \frac{n}{2k} - \frac{1}{2}$, and hence that
	\[\chi(M) \leq 2^{k-1}(m+1) \leq 2^{k-2}\left(\frac{n}{k} + 1\right).\]
\end{proof}

\begin{rem}\label{xrem}
For the convenience of the reader we shall now give a simple argument for the result from the proof of \cite[Theorem 32.6, p.~443]{FHT01} that we may order the $x_i$ and the $y_i$ in such a way that
\begin{align}\label{eqn16}
2 \deg x_i\leq \deg y_i
\end{align}
Indeed, we may argue as follows: The $y_i$ form a regular sequence. Denote by $I(\cdot)$ the ideal generated by specified elements in $\qq[x_1,\dots,x_l]$.

We may assume that the $x_i$ are ordered by degree, beginning with the smallest. Now we choose a permutation of the $y_i$ according to the following rule: We choose $y_{i_l}$ such that $y_{i_l}$ does not lie in the ideal $I(x_1,\dots, x_{l-1})$. We now choose $y_{i_{l-1}}$---out of the remaining $y_i$---such that $y_{i_{l-1}}$ does not lie in $I(x_1,\dots, x_{l-2})$, etc., i.e.~we always have $y_{i_{j}} \not \in I(x_1,\dots, x_{j-1})$.

Such a choice can always be made, since the $y_i$ (and every permutation of them) form a regular sequence. Indeed, assume the contrary. That is, we suppose that at some point there is no valid choice of $y_j$. This means that all the remaining $(k-j)$-many $y_i$ lie completely in the ideal $I(x_1,\dots, x_{k-j-1})$. This implies that there is an $\tilde l$ with $l-j\leq \tilde l \leq l$ such that no power of $[x_{\tilde l}]$ vanishes in cohomology; a contradiction to the finite-dimensionality of $H^*(X)$.

Consequently, each $y_{i_j}$ has a nontrivial summand which is formed of factors out of the $x_{{j}},\dots,x_{{l}}$ only---the wordlength in these factors is at least $2$. Since the $x_i$ were ordered by degree, we have that $\deg x_j\leq \deg x_{j+1}\leq \ldots\leq \deg x_{l}$. This implies that the summand, and thus the entire term $y_{i_j}$ satisfies
$2\deg x_j\leq \deg y_{i_j}$.
This proves that the number of algebra generators $l$ of $H^*(M)$ satisfies
\begin{align}\label{eqn11}
\chi(M)\geq 2^l
\end{align}
provided $\chi(M)>0$.
\end{rem}

Assuming this theorem, we reduce Corollary \ref{corC} to previously known results.

\begin{proof}[Proof of Corollary \ref{corC}]
Recall that $M^n$ has symmetry rank at least $\frac{n}{8} + 2\log_2 n$. Taking $k = 4$ in Theorem \ref{theoD}, we conclude either that the cohomology of $M$ has at most three generators or that it has four generators and fits into a rational fibration
	\[(\Lambda\langle e,e'\rangle, \dif e' = e^2) \embedded (\Lambda V, \dif) \to (\Lambda W, \overline{\dif}).\]
In the first case, $M$ satisfies the Halperin conjecture by Theorem \ref{theo07} in the case of at most three generators.

In the second case, the $H(\Lambda W,\overline{\dif})$ is also $F_0$ (due to \cite[Theorem 28.6, p.~375]{FHT01}) and has at most three generators. By Lupton's theorem again, it satisfies the Halperin conjecture. Due to \cite[Theorem 1, p.~154]{Mar90}, we conclude that $M$ satisfies the Halperin conjecture.
\end{proof}

\vspace{1in}

\begin{rem}
We remark that for a fixed $k$ (not depending on $n$) the upper bound lies in $\mathcal{O}(n)$ and is optimal in this sense. Assume $k=4$, i.e.~a symmetry rank of $2\log_2 n+n/8$ at least, then we obtain
\begin{align*}
0\leq \chi(M)\leq n+4
\end{align*}
\end{rem}

\begin{cor}\label{cor02}
If $M^n$, $n$ even, is Hard-Lefschetz, positively elliptic, positively curved and has symmetry rank at least $2\log_2 n+n/8$, then $M$ is either a rational $\cc\pp^{n/2}$ or
has the rational homotopy type given  by a minimal model of the form
\begin{align*}
(\Lambda \langle u,a,x,y\rangle,\dif) \qquad & \deg u=2, \deg a\geq n/4+1,\\& \deg y=n+1-\deg a\neq 2\deg a-1=\deg x,\\&  \dif u=\dif a=0 \\&
\dif x=a^2+k_1 au^{\deg a/2}+ k_2 u^{\deg a}\\&
\dif y=au^{(n-2\deg a+2)/2}+k_3u^{(n-\deg a+2)/2} \textrm{ for } k_i\in \qq
\end{align*}
\end{cor}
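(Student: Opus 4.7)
My plan is to apply Theorem~\ref{theoD} with $k=4$---the symmetry-rank assumption is precisely what is required---and to extract additional rigidity from the Hard-Lefschetz hypothesis. Being Hard-Lefschetz, $M$ admits a class $u\in H^2(M;\qq)$ with $u^{n/2}\neq 0$; furthermore $M$ satisfies the Halperin conjecture by the Meier criterion recalled in the introduction. Consequently $M$ has a pure minimal model $(\Lambda\langle x_1,\dots,x_l,y_1',\dots,y_l'\rangle,\dif)$, reindexed as in Remark~\ref{xrem} so that $\deg\dif y_i'\geq 2\deg x_i$. Theorem~\ref{theoD} gives $l\leq 4$, with $l=4$ only possible in the $\s^2\times\hh\pp^\infty$ shape discussed below.

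The next step eliminates three of the four shapes of the truncated algebra $H^{\leq c}(M;\qq)$ (with $c=\floor{(n+4)/4}$) isolated in the proof of Theorem~\ref{theoD}. In the sphere and $\hh\pp^\infty$ shapes one has $H^2(M;\qq)=0$, incompatible with $u$. In the $\s^2\times\hh\pp^\infty$ shape, the only degree-$4$ monomial of word-length $\geq 2$ in the low-degree even generators (degrees $2$ and $4$) is $u^2$, so $\dif y_1'=\lambda u^2$ with $\lambda\neq 0$ (else $y_1'$ would be closed and contribute to $b_3$, against the $F_0$-property); hence $[u]^2=0$ in cohomology, impossible once $[u]^{n/2}\neq 0$ and $n\geq 4$. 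This also eliminates the case $l=4$. So $M$ lies in the $\cc\pp^\infty$ shape with $l\in\{1,2,3\}$, a unique degree-$2$ even generator $u$, and every further even generator in degree $\geq c+1\geq n/4+1$.

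When $l=1$, $H^*(M;\qq)=\qq[u]/(u^{n/2+1})$ and $M$ is rationally $\cc\pp^{n/2}$. When $l=2$, call the remaining even generator $a$ and the odd generators $x,y$. The dimension identity $\sum(\deg\dif y_i'-\deg x_i)=n$ together with $\deg\dif y_i'\geq 2\deg x_i$ forces $\deg\dif x=2\deg a$ and $\deg\dif y=n+2-\deg a$, giving the claimed odd degrees of $x$ and $y$. Enumerating the monomials in $\qq[u,a]$ of degree $2\deg a$ yields
\[
\dif x=\alpha a^2+k_1 au^{\deg a/2}+k_2 u^{\deg a};
\]
the coefficient $\alpha$ must be non-zero (otherwise $a^m$ survives in cohomology for every $m$), and rescaling $x$ normalises $\alpha$ to $1$. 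The potential $a^2 u^{(n-3\deg a+2)/2}$ summand of $\dif y$ is absorbed by replacing $y$ with $y-\mu u^{(n-3\deg a+2)/2}x$; the resulting coefficient $\beta$ of $au^{(n-2\deg a+2)/2}$ in $\dif y$ must be non-zero, for otherwise $\dif y$ reduces to $k_3 u^{(n-\deg a+2)/2}$ and forces $u^{(n-\deg a+2)/2}=0$ in cohomology, whence $u^{n/2}=0$ since $\deg a\geq 2$, contradicting Hard-Lefschetz. A rescaling of $y$ normalises $\beta$ to $1$, producing the displayed form of $\dif y$; the inequality $\deg y\neq\deg x$ excludes the degenerate value $\deg a=(n+2)/3$, where a further recombination of $x$ and $y$ would be possible.

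The principal remaining obstacle is excluding $l=3$. A third even generator $a'$ of degree $>n/4$ would introduce Betti-number jumps $b_{\deg a'},b_{n-\deg a'}\geq 2$, whereas the $\cc\pp^\infty$-shape gives $b_{2i}=1$ for $2i\leq c$ and Hard-Lefschetz yields the monotonicity $b_0\leq b_2\leq\cdots\leq b_{n/2}$. Combined with the dimension identity $\sum(\deg\dif y_i'-\deg x_i)=n$ and a case analysis on whether $\deg a$ and $\deg a'$ lie above or below $n/2$, this should produce a contradiction; carrying out this elementary but delicate case work in every configuration of degrees and parities is the outstanding step.
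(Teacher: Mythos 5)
Your overall strategy coincides with the paper's: invoke Theorem~\ref{theoD} with $k=4$, use Hard--Lefschetz to force the $\cc\pp^\infty$ shape in low degrees (your elimination of the $\s^2\times\hh\pp^\infty$ case via $[u]^2=0$ is a fine, slightly more explicit version of what the paper asserts), and then pin down the differentials. However, there is a genuine gap: the exclusion of $l=3$, which you yourself flag as ``the outstanding step,'' is the crux of the corollary and is not carried out. The paper disposes of it in three lines using the single observation that Hard--Lefschetz makes every class of degree $>n/2$ divisible by $[u]$ (write $[z]\in H^{n/2+k}$ as $[u]^k[w]$). Since $\deg a,\deg b\geq n/4+1$, the products $[a^2]$, $[b^2]$, $[ab]$ all live above the middle degree, hence each must be killed by a relation of exactly that degree; the three odd generators therefore sit in degrees $2\deg a-1$, $2\deg b-1$, $\deg a+\deg b-1$, and the dimension formula gives $n=2\deg a+2\deg b-2\geq n+2$, a contradiction. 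Your proposed alternative via Betti-number monotonicity and ``delicate case work'' is not obviously convergent and is in any event unnecessary.

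The same observation repairs a second flaw: your claim that the dimension identity together with $\deg\dif y_i'\geq 2\deg x_i$ \emph{forces} $\deg\dif x=2\deg a$ is false as stated --- those constraints admit a whole range of degree distributions (e.g.\ $\deg\dif x=2\deg a+2$ with $\deg\dif y$ correspondingly smaller). What actually forces a relation in degree exactly $2\deg a$ containing $a^2$ is again that $[a]^2$ lies above middle degree and hence in the ideal $([u])$. Finally, your treatment of the degenerate case $\deg x=\deg y$ is circular: the inequality $\deg y\neq\deg x$ is part of the \emph{conclusion}, so you must rule the case out rather than invoke it. The paper does this by noting that for $\deg a=(n+2)/3$ the two relations both land in the three-dimensional span of $a^2$, $au^{(n+2)/6}$, $u^{(n+2)/3}$, leaving $H^{(2n+4)/3}(M)$ one-dimensional and contradicting Poincar\'e duality. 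Your normalisation of $\dif x$ and $\dif y$ (absorbing the $a^2u^{(n-3\deg a+2)/2}$ summand and showing the coefficient of $au^{(n-2\deg a+2)/2}$ is non-zero) is correct and in fact more detailed than the paper's.
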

\begin{prf}
Due to Theorem \ref{theoD} we know that there are at most $4$ generators of the rational cohomology algebra of $M$. The proof of the theorem tells us that, since $M$ looks like $\cc\pp^\infty$ in low degrees, the number of generators is at most $3$.

We have to show that there cannot be exactly three algebra generators of $H^*(M)$.
Suppose $(\Lambda \langle u,a,b,x,y,z\rangle,\dif)$ with $\deg u=2$, $\deg a\geq n/4+1$, $\deg b\geq n/4+1$ is the minimal model of $M$. Since $M$ is Hard-Lefschetz, the dual of an element lies in the ideal generated by $[u]$; in particular, so do the elements $[a^2], [b^2], [ab]$. Thus, due to Hard-Lefschetz, we conclude that $\deg x=\deg a^2-1=2\deg a-1$, $\deg y=\deg b^2-1=2\deg b-1$, $\deg z=\deg ab-1=\deg a+\deg b-1$. Using this information we compute the dimension of $M$ in terms of the degrees and we obtain the contradiction
\begin{align*}
n=\dim M&=(2\deg a+2\deg b+\deg a+\deg b)-(2+\deg a +\deg b)\\&=2 \deg a+2\deg b-2
\\&\geq 2\cdot (n/2+2)-2\\&>n
\end{align*}

Thus there may be at most one algebra generator of $H^*(M)$ above degree $n/4$ and the minimal model of $M$ has the form
\begin{align*}
(\Lambda \langle u,a,x,y\rangle,\dif)\qquad & \deg u=2, \deg a\geq n/4+1, \dif u=\dif a=0
\end{align*}
Due to Hard-Lefschetz we derive $\deg x=2 \deg a -1$ and
\begin{align*}
\dif x=a^2+k_1 au^{\deg a/2}+ k_2 u^{\deg a}.
\end{align*}
Consequently, we have $\deg y=n+1-\deg a$. Thus we have to consider two cases: In the first case $\deg x=\deg y=(2n+1)/3$, $\deg a=\frac{n+2}{3}$ implying that $\langle [a^2],[au^{(n+2)/6}], [u^{(n+2)/3}]\rangle\In H^{(2n+4)/3}(M)$ is just $1$-dimensional; a contradiction to Poincar\'e duality. In the second case we derive
\begin{align*}
\dif y=au^{(n-2\deg a+2)/2}+k_3u^{(n-\deg a+2)/2}
\end{align*}
due to Hard-Lefschetz again.
\end{prf}
Note that the given homotopy type includes the one of $\SO(n+2)/\SO(n)\times \U(1)$. We leave it to the reader to adapt this result
to quaternionically Hard-Lefschetz manifolds. Depending on the degrees of the elements in the minimal model, further restrictions on the $k_i$ apply. Clearly, in the case of a cohomologically symplectic $M$ (not necessarily Hard-Lefschetz) the number of generators of the cohomology algebra still can be $3$ at most.

\begin{rem}
Let us remark on a very special case of the theorem. If $M^{2n}$ is a positively curved manifold with the rational homotopy type of a simply-connected compact symmetric space $N$. If $M$ has symmetry rank at least $2 \log_2 2n + (2n)/8$, then $N$ is a product of a $\cc\pp^k$, $\hh\pp^k$, $\hh\pp^k\times \s^2$, $\s^k$, $\SO(k+2)/\SO(k)\times \SO(2)$, $\SO(k+3)/\SO(k)\times \SO(3)$ with at most two further spherical factors.

This follows directly from \cite[Corollary D]{AK13} and the fact that $M$ has to be highly periodic.
\end{rem}

Note that on a hermitian symmetric space we need less symmetry.
\begin{prop}\label{prop01}
If $M^{2n}$ is a positively curved manifold with the rational homotopy type of a simply-connected hermitian symmetric space $N$ and if $M$ has symmetry rank at least $2\log_2 (2n)+6$, then $N$ is either $\cc\pp^{n}$ or $\SO(n+2)/\U(1)\times \SO(n)$.
\end{prop}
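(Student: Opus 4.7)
The plan is to combine the low-degree $4$-periodicity result from \cite{Kennard-pre2}, exactly as invoked in the proof of \thm{theoD}, with the classification of simply-connected compact irreducible hermitian symmetric spaces. The symmetry rank bound $2\log_2(2n)+6$ is of the form $2\log_2(\dim M)+c/2-1$ with $c = 14$, so \cite{Kennard-pre2} yields that the truncated rational cohomology algebra $H^{*\leq 14}(M;\qq)$ is isomorphic as a graded algebra to the corresponding truncation of $H^*(\s^m;\qq)$ (for some $m > 14$), $H^*(\cc\pp^\infty;\qq)$, $H^*(\hh\pp^\infty;\qq)$, or $H^*(\s^2\times \hh\pp^\infty;\qq)$.

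Since $M$ is rationally K\"ahler (being rationally hermitian symmetric), the K\"ahler class $[\omega]\in H^2(M;\qq)$ is nonzero with $[\omega]^n\neq 0$. This eliminates the sphere and $\hh\pp^\infty$ models (where $b_2 = 0$) as well as the $\s^2\times\hh\pp^\infty$ model (in which the unique degree-two generator squares to zero), provided $n \geq 2$; the case $n = 1$ is immediate. Hence $H^{*\leq 14}(M;\qq)$ is the truncated $\cc\pp^\infty$ algebra, so in particular $b_{2i}(M) = 1$ for $0 \leq i \leq 7$. Writing $N \simeq_\qq N_1 \times \cdots \times N_r$ as a rational product of simply-connected compact irreducible hermitian symmetric factors, each $N_i$ satisfies $b_2(N_i) = 1$ (the space of parallel $(1,1)$-forms being one-dimensional by irreducibility). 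The constraint $b_2(M) = 1$ therefore forces $r = 1$; that is, $N$ is irreducible.

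It remains to consult the classification of simply-connected compact irreducible hermitian symmetric spaces---namely $\cc\pp^n$, $Q_n = \SO(n+2)/\SO(n)\U(1)$, the complex Grassmannians $\Gr_\cc(p, p+q)$ with $p, q \geq 2$, the Lagrangian Grassmannian $\Sp(k)/\U(k)$, the space $\SO(2k)/\U(k)$, and the exceptional spaces $\E_6/\Spin(10)\U(1)$ and $\E_7/\E_6\U(1)$---and to verify that for each candidate other than $\cc\pp^n$ and $Q_n$ a Weyl-formula computation of the Poincar\'e polynomial yields $b_{2i} \geq 2$ for some $i$ with $2 \leq i \leq 7$: typically $b_4 = 2$ for the Grassmannians, $b_6 = 2$ for $\Sp(k)/\U(k)$ with $k \geq 3$ and for $\SO(2k)/\U(k)$ with $k \geq 4$, and $b_8 = 2$ for $\E_6/\Spin(10)\U(1)$, with an analogous small Betti number doubling for $\E_7/\E_6\U(1)$. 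Each such case contradicts the conclusion reached above. The small degenerate cases $\Sp(2)/\U(2)$ and $\SO(6)/\U(3)$ are rationally equivalent to $\cc\pp^3$ and are therefore absorbed into the $\cc\pp^n$ case. The main obstacle is this final case-by-case verification, in particular the Poincar\'e polynomial bookkeeping up to degree $14$ for the exceptional hermitian symmetric spaces.
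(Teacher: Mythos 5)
Your proposal is correct and follows essentially the same route as the paper: both use the symmetry rank bound to invoke four-periodicity up to degree $14$, reduce to the $\cc\pp^\infty$-type truncated cohomology (the paper implicitly, you explicitly via the K\"ahler class), rule out reducible $N$ via $b_2=1$, and then run through the classification of irreducible hermitian symmetric spaces checking that each excluded type has some $b_{2i}=2$ for $2\le i\le 5$ (the paper records $b_4=2$ for the Grassmannians, $b_6=2$ for $\SO(2k)/\U(k)$ and $\Sp(k)/\U(k)$, $b_8=2$ for the $\E_6$ space, and $b_{10}=2$ for the $\E_7$ space, the last obtained from the exponents of $\E_6$ and $\E_7$).
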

\begin{prf}
We make use of the classification of irreducible hermitian symmetric spaces. Such a space is one of
\begin{align}\label{eqn18}
\nonumber&M_1:=\SU(p+q)/\mathbf{S}(\U(p)\times \U(q)),\\
\nonumber&M_2:=\SO(2n)/\U(n),\\
 &M_3:=\Sp(n)/\U(n),\\
\nonumber &M_4:=\SO(n+2)/\SO(n)\times \SO(2), \\
\nonumber &M_5:= \E_6/\SO(10)\times \SO(2),\\
\nonumber &M_6:=  \E_7/\E_6\times \SO(2)
\end{align}
In the case of $M_1$ we may assume that $p,q\geq 2$, since $M_1$ is complex projective otherwise. For $M_2$ we may assume that $n>3$ by the classification results in small dimensions.
We compute the respective first few Betti numbers as
\begin{align*}
M_1\co & b_2=1, b_4=2\\
M_2\co & b_2=1, b_4=1, b_6=2\\
M_3\co & b_2=1, b_4=1, b_6=2\\
M_5\co & b_2=1, b_4=1, b_6=1, b_8=2\\
M_6\co & b_2=1, b_4=1, b_6=1, b_8=1, b_{10}=2\\
\end{align*}
In the last two cases this can be deduced using the degrees of the non-trivial rational homotopy groups---each one-dimensional---of the exceptional space $\E_6$ respectively $\E_7$ as being $3,9,11,15,17,23$ and $3,11,15,19,23,27,35$.

The symmetry assumptions guarantees $4$-periodicity up to degree $14$, in particular, in our case, the Betti numbers have to satisfy $b_2=b_4=b_6=b_8=b_{10}=1$. Thus we deduce that the only hermitian symmetric spaces which resemble $\cc\pp^\infty$ in degrees $2$ to $10$ are the ones from the assertion. Note that no Euclidean products may arise, since they have $b_2>1$.(Clearly, for small $n$ only $\cc\pp^n$ may arise.)
\end{prf}

\vspace{3mm}
Let us end this section by making a speculation: Let $(\Lambda V,\dif)$ be a simply-connected elliptic minimal Sullivan algebra. Denote by $V'$ its spherical cohomology, i.e.~the subspace of $V$ with vanishing differential.
Then does $H(\Lambda V,\dif)\geq 2^{\dim V'}$ hold (with equality if and only if the algebra is isomorphic to the product of $\dim V'$-many spheres)?

This inequality in the case of (simply-connected) $F_0$-spaces is classical and the content of \eqref{eqn11}. Clearly, in this case the cohomology algebra $H^*(X)$ has exactly $(\dim V)/2$-many algebra generators which exactly correspond to the space $V'$.

\section{Proof of Theorem \ref{theoC}}\label{sec8}

In this section we shall prove the Hopf conjecture under logarithmic symmetry rank.
\begin{lemma}\label{lemma01}
Let $M^{n}$, $n\geq 8$ even, be a simply-connected rationally elliptic space with four-periodic rational cohomology. Then $\chi(M)>0$ and $H^\odd(M)=0$.
\end{lemma}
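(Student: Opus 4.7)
The plan is to reduce the lemma to the single claim that $H^\odd(M;\qq)=0$; positivity of $\chi(M)$ then follows from \cite[Proposition~32.10, p.~444]{FHT01}, which for a rationally elliptic space asserts $\chi\geq 0$ with strict positivity equivalent to vanishing of all odd Betti numbers. Four-periodicity furnishes $x\in H^4(M;\qq)$ such that cup product with $x$ induces isomorphisms $H^i(M;\qq)\to H^{i+4}(M;\qq)$ for $0\leq i\leq n-4$. Since $M$ is simply-connected, $b_1(M)=0$, and periodicity propagates this to $b_{4k+1}(M)=0$ for every $k$ in range. The task therefore reduces to showing $b_3(M)=0$.

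The case $n\equiv 0\pmod 4$ is essentially formal: periodicity gives $b_{n-1}=b_3$, while Poincar\'e duality yields $b_{n-1}=b_1=0$. For the remaining case $n=4m+2$ with $m\geq 2$ (using $n\geq 8$), assume $b_3=r\geq 1$ and determine all Betti numbers via periodicity combined with Poincar\'e duality: $b_{4k}=b_{4k+2}=1$ for $0\leq k\leq m$ and $b_{4k+3}=r$ for $0\leq k\leq m-1$. A direct calculation yields $\chi(M)=2(m+1)-mr$. For $r\in\{1,2\}$, $\chi(M)>0$ together with $\bodd(M)>0$ violates the dichotomy cited above; for $r\geq 4$, one has $\chi(M)\leq 2-2m<0$, violating rational ellipticity. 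Only the borderline $m=2$, $r=3$, for which $\chi(M)=0$, survives this bookkeeping.

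For that borderline subcase one must unwrap the ring structure. Write $H^2=\qq\langle u\rangle$, $H^3=\qq\langle\alpha_1,\alpha_2,\alpha_3\rangle$, and $H^4=\qq\langle x\rangle$. Vanishing of $H^5$ forces $u\alpha_i=0$; writing $u^2=cx$ in $H^4$ gives $c\alpha_i x=\alpha_i u^2=(\alpha_i u)u=0$, and since $\alpha_i x\neq 0$ by periodicity, $c=0$, i.e.\ $u^2=0$. Periodicity then identifies $H^6=\qq\langle ux\rangle$ and $H^{10}=\qq\langle ux^2\rangle$, so $\alpha_i\alpha_j=d_{ij}\,ux$ for an antisymmetric $3\times 3$ matrix $(d_{ij})$. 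Poincar\'e duality requires the cup product pairing $H^3\otimes H^7\to H^{10}$ to be non-degenerate; in the bases $\{\alpha_i\}$ and $\{\alpha_j x\}$ this pairing has matrix exactly $(d_{ij})$, which is singular since every odd-order antisymmetric matrix is. Contradiction.

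The main obstacle is precisely this borderline at $n=10$, $b_3=3$: Euler-characteristic bookkeeping alone fails to exclude it, and the only route I see involves the antisymmetry of the cup product on $H^3\otimes H^3\to H^6$ combined with Poincar\'e duality rather than any further ellipticity or symmetry input.
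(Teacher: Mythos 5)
Your argument is correct and takes essentially the same route as the paper: the reduction to $b_3(M)=0$, the case $n\equiv 0\bmod 4$ via Poincar\'e duality and periodicity, the same Euler characteristic formula $\chi(M)=2+\tfrac{n-2}{4}\,(2-b_3)$ combined with the elliptic dichotomy ($\chi\geq 0$, with $\chi>0$ equivalent to $\bodd=0$), and the skew-symmetry of the cup product pairing on $H^3$. Your explicit exclusion of the borderline case $n=10$, $b_3=3$ by exhibiting a singular $3\times 3$ antisymmetric intersection matrix is precisely the paper's parenthetical observation that $b_3$ must be even, spelled out concretely.
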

\begin{prf}
By definition (see \cite{Ken13}), it suffices to show that $b_3(M)=0$. First, assume $n\equiv 0\bmod{4}$. Due to four--periodicity and Poincar\'e duality we conclude $b_3(M) = b_{n-1}(M) = 0$.

Now assume that $n\equiv 2\bmod{4}$. Moreover assume that $M$ is not a rational sphere. It follows from four--periodicity and Poincar\'e duality that $b_2(M) = b_4(M) = 1$ and
\begin{align}\label{eqn17}
\chi(M) = 2 + \frac{n - 2}{4}\of{2 - b_3(M)}.
\end{align}
Since $M$ is rationally elliptic, and since $n \geq 10$, this inequality implies $b_3(N) \leq 3$. However, $b_3(N)$ must be even by Poincar\'e duality, four-periodicity and the graded commutativity of the cup product, hence $b_3(M) \leq 2$. In this case, however, we compute that $\chi(M) > 0$, i.e.~$H^\odd(M)=0$.
\end{prf}
\begin{rem}
For the sake of completeness we discuss the Euler characteristic of a six-dimensional simply-connected rationally elliptic space. (Here four-periodicity is losing its meaning.)

Due to Poincar\'e duality, the intersection form is non-degenerate and skew symmetric and $b_3$ is even. Inequality \eqref{eqn17} specializes to $0\leq 4-2b_3(M)$ and this holds if and only if $b_3(M)\in \{0,2\}$. Due to Poincar\'e duality the Euler characteristic of $M$ is given as $\chi(M)=2+2b_2(M)-b_3(M)$. Thus if $b_3(M)=0$, we compute $\chi(M)>0$. If $b_3(M)=2$, then ellipticity yields that $\chi(M)=0$. This can only be satisfied if $b_1(M)=b_2(M)=b_4(M)=b_5(M)=0$. Trivially, in this case $M\simeq_\qq \s^3\times \s^3$.
\end{rem}

\begin{proof}[\textsc{Proof of Theorem \ref{theoC}}]
It suffices to prove that $\chi(M)>0$. Indeed, a rationally elliptic space $M$ of positive Euler characteristic has vanishing odd-degree Betti numbers---see \cite[Proposition 32.10, p.~444]{FHT01}. Poincar\'e duality then implies $\chi(M)\geq 2$.

We mimic the proof of \cite[Theorem A]{AK13}. Recall our notation from there. Let $T$ be a torus acting effectively on $M$. If $N\In M$ is a submanifold on which $T$ acts, we set $\operatorname{dk}(N):= \dim \ker(T|_N)$.

We proceed by induction over the dimension. If $n\leq 48$, the symmetry assumption is larger than $n/4+1$ (and larger than $n/2$ for $n<10$), hence the result follows from the homotopy classification (resp. the diffeomorphism classification) of Wilking \cite{Wilking03} (resp. Grove--Searle \cite{GroveSearle94}). Therefore let $n \geq 50$.

As in \cite{AK13}, we have to differentiate two cases. Consider the isotropy representation of the isometrically acting torus at a fixed-point $x$.

\begin{description}
\item[Case 1] There exists $x \in M^T$ such that every $\iota\in\Z_2^{\rk T}$ with $\dk\of{M^\iota_x} \leq 1$ and $\cod\of{M^\iota_x} \leq \frac{n}{2}$ actually has $\cod\of{M^\iota_x} \leq \frac{n}{4}$.
\item[Case 2] For all $x \in M^T$, there exists $\iota\in \Z_2^{\rk T}$ with $\dk\of{M^\iota_x} \leq 1$ and $\frac{n}{4}<\cod\of{M^\iota_x} \leq \frac{n}{2}$.
\end{description}

In the first case we conclude in \cite[Theorem A]{AK13} that $M$ has $4$-periodic cohomology and Lemma \ref{lemma01} yields the result. Hence may assume that Case 2 applies.

Cover the torus fixed-point set by the respective $M_x^\iota$ from Case 2. We use the fact that the fixed-point set of a circle action on a rationally elliptic space is again rationally elliptic. We observe the following:
	\begin{itemize}
	\item If $\dk(M_x^\iota)=0$, we consider the effective $T$-action on $M_x^\iota$. There exists an $\s^1\In T$ together with a fixed-point component $F\In (M_x^\iota)^{\s^1}$ satisfying $x\in F$ and $\dk(F)=1$. Thus $F$ is rationally elliptic and has at least symmetry rank $\log_{4/3}(\dim F)$. The induction assumption applies to $F$ and $H^\odd(F)=0$.
	\item If $\dk(M^\iota_x) = 1$ and $M^\iota_x$ is a fixed-point component of a circle in $T$, then $M^\iota_x$ is rationally elliptic. The induction hypothesis applies to $M^\iota_x$, hence $H^\odd(M^\iota_x) = 0$.
	\item If $\dk(M^\iota_x) = 1$, $M^\iota_x$ is fixed by a circle $S^1$, but $M^\iota_x$ is strictly contained in $M^{S^1}_x$, then replace $\iota$ by the involution in $S^1$ and proceed as in one of the previous cases.
	\end{itemize}

In this manner we cover the whole fixed-point set $M^T$ with fixed-point components with vanishing odd Betti numbers. Since the sum of the odd Betti numbers of a such a component of $N^T$ is smaller equal the sum of the odd Betti numbers of $N$, we have $0=b_\odd(M^T)$. Since $\chi(M) = \chi(M^T) > 0$, we conclude from rational ellipticity that $H^\odd(M) = 0$.
\end{proof}
\begin{rem}
Suppose $M^n$ is a closed, one-connected, rationally elliptic manifold. If $M$ admits an effective action by a $T=\s^1$ such that $M^T$ has an isolated fixed point, then $\sum b_{2i+1}(M) = 0$ and hence $\chi(M) \geq 2$.
Indeed, assume that $M^T$ has an isolated fixed point and that $M$ is rationally elliptic. According to \cite[Corollary 7.49, p.~302]{FOT08} we derive that if the homotopy Euler characteristic satisfies $\chi_\pi(M)\neq 0$, then $M^T$ has no contractible component. Hence the assumed isolated fixed point of $M^T$ implies that $\chi_\pi(M)=0$. Since the rational ellipticity of $M$ implies $\chi(M)\geq 0$ with equality if and only if $\chi_\pi(M)\neq 0$ (see \cite[Proposition 32.10, p.~444]{FHT01}), we conclude $\chi(M) > 0$. This together with the assumption of rational ellipticity (see \cite[Proposition 32.10]{FHT01}) implies $\sum b_{2i+1}(M) = 0$, hence $\sum b_{2i+1}(M^T) = 0$ by Conner's theorem.
\end{rem}


\section{On a conjecture by Fred Wilhelm and the proof of Theorem \ref{theoE}} \label{sec02}

Clearly, it is desirable to try to fixate the rational homotopy type of an (elliptic) manifold of positive curvature and symmetry. However, at this stage every approach---even using equivariant rational homotopy theory---seems to be doomed to fail unless the following question---admittedly formulated in a suggestive way and vastly generalising the Hopf conjecture for $\s^2\times \s^2$---is answered at least in parts.
\begin{ques*}
Does positive curvature oppose the existence of rational product structures?
\end{ques*}
One positive answer to this question would show that the rational cohomology algebra cannot split into products.

The Halperin conjecture is clearly a conjecture on the structure of the cohomology algebra of a space. It seems not to differ between products and ``non-products'', since it holds in the somehow extremal cases of Cartesian products as well as Hard-Lefschetz manifolds.

Fred Wilhelm stated the following conjecture
\begin{conjec}{Wilhelm}\label{conj01}
Let $M\to B$ be a (non-trivial) Riemannian submersion with $M$ a complete positively curved manifold, then $2\dim B>\dim M$.
\end{conjec}
In the following we want to assume all positively curved manifolds to be compact, i.e.~due to the proven Soul conjecture, we assume $M$ not to be diffeomorphic to $\rr^n$. Then the submersion is a fibre bundle (with fiber $F$) according to Ehresmann and the conjecture can be stated as $\dim B>\dim F$.

This conjecture seems to be very hard to tackle. Nonetheless, in this section we shall even generalize this question to the following
\begin{ques}\label{ques01}
Let $M$ be a closed simply-connected manifold admitting a metric of positive curvature. If $M$ fits into a (non-trivial rational) fibration $F\hto{} M \to B$ of simply-connected manifolds (or topological spaces of finite type and finite formal dimension), when does $\dim B>\dim F$ hold?
\end{ques}
Note that this question cannot always be answered in the positive, since the Aloff-Wallach spaces rationally split as a product of $\s^2$ and $\s^5$.

We want to make the following conjecture and suggest it as yet another generalized form of the Hopf conjecture on $\s^2\times\s^2$. Assuming parts of it, we shall try to partially answer question \ref{ques01}. We shall justify this below.

\begin{prob}\label{conj02}
\begin{itemize}
Let $M^n$ be a simply-connected closed manifold admitting a metric of positive sectional curvature.
\item
If $n$ is even, then rationally $M$ does not split as a (non-trivial) product.
\item
If $n$ is odd, then rationally $M$ is a formal elliptic pure space with the dimension of the odd degree spherical rational homotopy equal to one, i.e.~the minimal model of $M$ splits as the product of a pure elliptic algebra of positive Euler characteristic and a factor which is the minimal model of an odd-dimensional sphere.
\end{itemize}

\vspace{3mm}

For large dimensions (at least larger than $24$) we may give several more speculations:
Let $M$ be a large-dimensional simply-connected closed manifold admitting a metric of positive curvature. Then rationally $M$ does not split as a (non-trivial) fibration, i.e.~whenever
\begin{align*}
\big(V,\dif ) \hto{}( \Lambda (V\oplus W),\dif) \to (W,\bar \dif)
\end{align*}
is a rational fibration (of Sullivan algebras) and $(\Lambda (V\oplus W),\dif)$ is a model for $M$, then
\begin{description}
\item[weak version]
$(\Lambda (V\oplus W),\dif)$ is not minimal as a Sullivan algebra.
\item[strong version] the rational transgression $\dif_0\co W^* \to V^{*+1}$ in the long exact homotopy sequence of the fibration is injective on $W^\odd$.
\item[alternate version] with $(x_i)$ a basis of $W^\odd$ and $(x_i')$ a basis of $V^\odd$, the following holds:
\begin{itemize}
\item[-] $\sum \deg x_i< \sum \deg x_i'$
\item[-] the projections $\dif(x_i)|_{\Lambda V}$ form a regular sequence in $\Lambda V^\even$ with the property that $\dif(\Lambda V)\cap \dif(W^\odd)|_V=0$.
\end{itemize}
\end{description}
\end{prob}
(For the rational transgression see \cite[p.~214]{FHT01}.)

No transition of positive curvature to topology seems to be known (from a rational point of view) which is as good as the transition of non-negative curvature to rationally elliptic spaces. The Problem tries to fill this gap by introducing reasonable structures which might be satisfied by positively curved manifolds.

\begin{rem}
Let us comment on this conjecture and its hierarchy. First, note that this conjecture clearly \emph{does not} imply that $M$ cannot be the total space of a fibration in any case.

The low-dimensional examples of positive curvature clearly do admit rational fibration structures. We shall give concrete depictions below.

A rationally elliptic space of positive Euler characteristic is formal. Thus a combined Bott--Hopf conjecture would yield the formality of even--dimensional simply-connected positively curved manifolds. Thus it is tempting to conjecture this for the odd dimensional examples, too. An elliptic pure algebra necessarily splits in the form given in the Problem, if it is formal.

Let us come to the high-dimensional version:
\begin{itemize}
\item The weak version already generalizes the classical Hopf conjecture on $\s^2\times\s^2$, i.e.~in particular, no product structures may appear in rational homotopy.
\item It seems easy to conjecture that large enough positively curved manifolds are homotopically simple, like rational CROSSes. We introduced the alternate version to show that a less restrictive, reasonable and more involved conjecture can be stated.
    In the next example we shall show that this version from a rational viewpoint is not good enough to deduce the Wilhelm conjecture.
\item
The strong version is a special case of the alternate one, where the regular sequence is given by the trivial one consisting of a subset of a basis of $W^\even$. The weak version does not a priori imply the strong version. For this consider the next example.
\end{itemize}
\end{rem}
\begin{exa}{alternate version does not imply Wilhelm conjecture}
The alternate version of Problem \ref{conj02} does not imply the Wilhelm conjecture purely from a rational viewpoint:

Consider the following rational fibration. In the notation from above, set $V:=\langle y_1,y_2,x'_1,x'_2\rangle$ with $\deg y_1=\deg y_2=4$, $\deg x_1'=\deg x_2'=39$, $\dif y_1=\dif y_2=0$, $\dif x_i'=y_i^{10}$. Set $W:=\langle x_1,x_2\rangle$ with $\deg x_1=27$, $\deg x_2=47$, $\dif x_1=y_1^7+y_2^7$, $\dif x_2=y_1^6y_2^6$, where $\dif$ is already the twisted differential in the rational fibration. We compute that $\deg x_1+\deg x_2=74<78=\deg x_1'+\deg x_2'$ and $\dim |(\Lambda W,\bar \dif)|=74>72=\dim |(\Lambda V,\dif)|$.
\end{exa}

\begin{exa}{weak version does not imply strong version}
Set $V:=\langle c,x'\rangle$ with $\deg c=4$, $\deg x'=7$, $\dif c=0$, $\dif x'=c^2$. Set $W:=\langle y_1,y_2,x_1,x_2\rangle$ with $\deg y_1=\deg y_2=2$, $\deg x_1=\deg x_2=3$, $\dif y_1=\dif y_2=0$, $\dif x_1=y_1^2+y_2^2$, $\dif x_2=y_1y_2+c$ where $\dif$ is already the twisted differential in the rational fibration. We compute a minimal model of the total space as $(\Lambda \langle y_1,y_2,x_1,x'\rangle,\dif)$ with $\dif y_i=0$, $\dif x_1=y_1^2+y_2^2$, $\dif x'=y_1^2y_2^2$. This minimal model does not split as a fibration in the sense of the weak conjecture. However, in the sense of the strong conjecture, the transgression $\dif_0$ in the fibration vanishes on $x_2$.

This shows that the weak version does not imply the strong one a priori.
\end{exa}

In contrast to the previous observations, we observe that the strong version is good enough to deduce the Wilhelm conjecture.
\begin{lemma}\label{lemma02}
Let $F\hto{} E\to B$ be a (non-trivial) fibration of simply-connected rationally elliptic spaces.
Suppose the strong conclusion of Problem \ref{conj02} holds for $E$ (not necessarily admitting positive curvature). Then it holds that $\dim F< \dim B$.
\end{lemma}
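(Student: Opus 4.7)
The plan is to translate the topological fibration into a Sullivan KS-extension, apply the strong conclusion of Problem \ref{conj02} to pair each odd generator of $F$ with an even generator of $B$ of degree one higher, and then compare the two formal dimensions via Friedlander--Halperin-style sum inequalities.

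First I would realise the fibration by a minimal KS-extension
\[
(\Lambda V,\dif_V)\hookrightarrow(\Lambda(V\oplus W),\dif)\to(\Lambda W,\bar\dif)
\]
with $\Lambda V$ modelling $B$ and $\Lambda W$ modelling $F$. Writing $q_V:=\dim V^\even$, $p_V:=\dim V^\odd$ and analogously $q_W,p_W$, and abbreviating by $\sum V^\even$ the sum of the degrees of a homogeneous basis of $V^\even$ (similarly for $V^\odd,W^\even,W^\odd$), the formal dimension formula gives $\dim B=\sum V^\odd-\sum V^\even+q_V$ and analogously for $F$. By the strong conclusion the linear transgression $\dif_0\co W^\odd\to V^\even$ (raising degree by one) is injective; restricting degree-by-degree and performing a direct matching produces an injection $\sigma$ of a basis of $W^\odd$ into a basis of $V^\even$ with $\deg v_{\sigma(k)}=\deg w_k+1$, so $\sum V^\even\geq\sum W^\odd+p_W$. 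Using $\sum W^\even\geq 2q_W$ (simply-connectedness of $F$) and the dimension formula, one moreover obtains $\sum W^\odd\geq\dim F+q_W$.

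The main obstacle will be the complementary upper bound $\sum V^\even\leq\dim B$, which I would derive by reproducing the argument of Remark \ref{xrem} in the general (not only $F_0$) elliptic setting. Passing to a pure Sullivan model of $B$, the pure-part differentials $\dif y_j$ generate an ideal of height $q_V$ in $\Lambda V^\even$ (since $B$ is elliptic), so one can select among them a regular subsequence of length $q_V$; the word-length-two argument of the Remark then gives $\deg y_{j_{\pi(i)}}\geq 2\deg x_i-1$ after ordering the even generators by degree. Adding the remaining $p_V-q_V$ odd generators (each of degree at least $3$ by simply-connectedness) produces $\sum V^\odd\geq 2\sum V^\even-q_V+3(p_V-q_V)$, and substitution into the dimension formula together with $p_V\geq q_V$ yields $\sum V^\even\leq\dim B$. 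Chaining,
\[
\dim B\;\geq\;\sum V^\even\;\geq\;\sum W^\odd+p_W\;\geq\;\dim F+q_W+p_W.
\]
Non-triviality of the fibration forces $F$ to be non-contractible; as any simply-connected non-contractible elliptic space has $p_W\geq 1$ (otherwise its minimal model would be the infinite-dimensional polynomial algebra $\Lambda V^\even$), we conclude $\dim F<\dim B$.
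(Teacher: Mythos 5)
Your argument is essentially the paper's: the injective transgression places a degree-shifted copy of $W^\odd$ inside $V^\even$, and comparing $\dim F\le\sum_{W^\odd}\deg$ with $\dim B\ge\sum_{V^\even}\deg$ then forces $\dim B\ge\dim F+\dim W^\odd>\dim F$. The one point of divergence is that you re-derive the inequality $\sum_{V^\even}\deg\le\dim B$ by hand, whereas this is exactly \cite[Theorem 32.6(ii)]{FHT01}, which the paper simply cites; moreover your sketch of it is not quite right as stated, since when $\dim V^\odd>\dim V^\even$ one cannot in general select a regular subsequence from among the pure differentials $\dif y_j$ (e.g.\ $ab$, $a(a+b)$, $b(a+b)$ in $\qq[a,b]$ generate an ideal of height two, yet no two of them form a regular sequence). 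As the inequality itself is a standard cited fact, this does not affect the correctness of the overall proof.
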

\begin{prf}
We compute the dimension of $B$ using the cohomological information of $F$. That is, let $x_1, \dots , x_n$ be a basis of $W^\odd$ where $(\Lambda W,\bar \dif)$ is the minimal model of $F$. It is trivial to see using the dimension formula (cf.~\cite[p.~434]{FHT01}) that
\begin{align}\label{eqn12}
\dim F=\dim |(\Lambda W,\bar\dif)|\leq \dim \prod_{1\leq i\leq n}\s^{\deg x_i}=\bigg(\sum_{1\leq i\leq n} \deg x_i\bigg).
\end{align}
According to Problem \ref{conj02}, the rational transgression on $W^\odd$ is injective, i.e.~there is an $n$-dimensional (not necessarily homogeneous) subspace $\langle y_1,\dots, y_n\rangle\In V^\even$ where $(\Lambda V,\dif)$ is the minimal model of $B$. Since $(\Lambda V,\dif)$ is rationally elliptic, it has finite dimension. Consequently, we obtain that $\dim V^\odd\geq n$. As in the proof of Theorem \ref{theoD} we may order a basis $(z_i)_{1\leq i\leq n'}$ of $V^\odd$ such that $\deg z_i\geq 2 \deg y_i$ for $1\leq i\leq n$. As a consequence, we derive that
\begin{align*}
\dim B=\dim |(\Lambda V,\dif)|\geq \prod_{1\leq i\leq n} \s^{\deg y_i}=\sum_{1\leq i\leq n} \deg y_i=\bigg(\sum_{1\leq i\leq n} \deg x_i\bigg)+n
\end{align*}
(For this estimate we use \cite[Theorem 32.6.ii, p.~441]{FHT01}, which gives the sum of all even-degree generators as a lower bound on the dimension.)

Comparing this to estimate \eqref{eqn12} proves the result.
\end{prf}
\begin{rem}
\begin{itemize}
\item The estimate $\dim B-\dim F\geq n$ from the lemma is sharp, as the example $\s^3\hto{} T_1\s^4\to \s^4$ shows. From \cite{Grove-Verdiani-Ziller11}, \cite{Dearricott11} we recall that the total space in this case does carry positive curvature.
\item
We remark that if $F$ is $F_0$ we may improve the estimate on its dimension by
\begin{align*}
\dim F\leq \dim \prod_{1\leq i\leq n}\cc\pp^{(\deg x_i-1)/2}=\bigg(\sum_{1\leq i\leq n} \deg x_i\bigg)-n.
\end{align*}
and the gap in dimension is $\dim B-\dim F\geq 2n$. Also this estimate is sharp due to $\cc\pp^1\hto{}\cc\pp^2\to \hh\pp^1$, the twistor submersion over the positive quaternion K\"ahler manifold $\hh\pp^1$ of positive curvature (with $\cc\pp^2$ carrying positive curvature).
\end{itemize}
\end{rem}
As we remarked we suggest Problem \ref{conj02} as a rational version of the Wilhelm conjecture. That is, transitioning positive curvature as rationally elliptic to the world of rational spaces, the strong version of Problem \ref{conj02} implies the Wilhelm conjecture.
\begin{theo}
We assume the Bott conjecture as well as the strong version of Problem \ref{conj02} to be true.
It follows that the Wilhelm conjecture holds true.
\end{theo}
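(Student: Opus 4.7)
The plan is to combine O'Neill's formula, the Bott conjecture, and the strong version of Problem \ref{conj02} in order to reduce the Wilhelm conjecture to an application of Lemma \ref{lemma02}. Let $p\co M \to B$ be a non-trivial Riemannian submersion with $M$ positively curved. By the convention in effect throughout this section, $M$ is compact, and Ehresmann's theorem promotes $p$ to a smooth fibre bundle with compact fibre $F$; rephrased, the Wilhelm conjecture then asks for $\dim B > \dim F$.

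First I would establish rational ellipticity of all three spaces in the fibration. Since $M$ carries positive curvature, the Bott conjecture (assumed in the statement) forces $M$ to be rationally elliptic. The Gray--O'Neill formula shows that the horizontal sectional curvatures of $M$ descend to sectional curvatures of $B$, so that $B$ also carries positive (and in particular non-negative) curvature; applying Bott once more, $B$ is rationally elliptic. The long exact homotopy sequence
\begin{align*}
\cdots \to \pi_{k+1}(B)\otimes\qq \to \pi_k(F)\otimes\qq \to \pi_k(M)\otimes\qq \to \pi_k(B)\otimes\qq \to \cdots
\end{align*}
together with finite-dimensionality of $\pi_*(M)\otimes\qq$ and $\pi_*(B)\otimes\qq$ then forces $\pi_*(F)\otimes\qq$ to be finite-dimensional, so $F$ is rationally elliptic as well.

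Next I would translate the topological fibre bundle into the language of Sullivan models. The fibration $F \hookrightarrow M \to B$ yields a relative minimal Sullivan model of the shape
\begin{align*}
(\Lambda V,\dif) \hookrightarrow (\Lambda(V\oplus W),\dif) \to (\Lambda W,\bar\dif),
\end{align*}
where $(\Lambda V,\dif)$ is a minimal model for $B$ and $(\Lambda W,\bar\dif)$ is a minimal model for $F$. Applying the strong version of Problem \ref{conj02} to $M$ then guarantees that the rational transgression $\dif_0\co W^* \to V^{*+1}$ is injective on $W^{\odd}$. This is exactly the hypothesis feeding into Lemma \ref{lemma02}, which upon invocation delivers the inequality $\dim F < \dim B$, i.e.\ the Wilhelm conjecture.

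The principal obstacle I foresee is the possible failure of the fibre $F$ to be simply-connected, whereas Lemma \ref{lemma02} is phrased for simply-connected $F$ and $B$. The simple-connectedness of $B$ follows from that of $M$ via the long exact sequence, but $\pi_1(F)$ is only controlled by $\pi_2(B)\otimes\qq$; one would need either to pass to a suitable cover of $F$, to exploit nilpotency of $F$ in order to apply minimal-model techniques, or to extend Lemma \ref{lemma02} to the non-simply-connected setting. A secondary, more quantitative caveat is the ``large dimension'' assumption attached to the strong version of Problem \ref{conj02}; for positively curved manifolds $M$ of small dimension one would have to verify the Wilhelm conjecture separately, appealing to the known classification of low-dimensional positively curved spaces rather than to the rational-homotopy machinery developed above.
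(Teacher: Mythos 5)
Your argument is essentially identical to the paper's proof: O'Neill's formula gives positive curvature on the base, the Bott conjecture gives rational ellipticity of $M$ and $B$, the long exact homotopy sequence gives ellipticity of the fibre, and Lemma \ref{lemma02} then yields $\dim B > \dim F$. The caveats you raise (simple-connectedness of the fibre and the small-dimension regime excluded by Problem \ref{conj02}) are legitimate points that the paper itself passes over silently, so they do not mark a divergence from its argument.
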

\begin{prf}
If the total space $M^{n}$ of a Riemannian submersion is positively curved, so is the base space due to O'Neill. Due to the Bott conjecture, both spaces are rationally elliptic. The long exact sequence of the fibration tells us that also the fiber is rationally elliptic. Lemma \ref{lemma02} yields the result.
\end{prf}

\vspace{5mm}

After these warm-up considerations and speculations let us now head towards the proof of Theorem \ref{theoE}. For this we shall need the following recent result (see \cite[p.~10]{Yea12}, \cite{Hal}).
\begin{theo}[Halperin]\label{theo02}
Let $F\hto{} X\to B$ be a fibration where $X$ is a rationally elliptic nilpotent space. Suppose that $B$ is simply-connected, rationally of finite type, and that $\cat B<\infty$. If the Betti numbers $\dim H_i(F)$ grow at most polynomially in $i$, then $B$ is rationally elliptic.
\end{theo}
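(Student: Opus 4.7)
The plan is to proceed by contradiction, making essential use of the rational dichotomy theorem of Friedlander--Halperin. Since $B$ is simply-connected, rationally of finite type, and satisfies $\cat B<\infty$, that theorem applies and tells us that $B$ is either rationally elliptic or rationally hyperbolic, with $\sum_{i\leq n}\dim \pi_i(B)\otimes\qq$ growing at least exponentially in $n$ in the latter case. It therefore suffices to rule out the hyperbolic possibility.

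Assume then that $B$ is rationally hyperbolic. Combining this with the Milnor--Moore theorem, which realises $H^*(\Omega B;\qq)$ as the universal enveloping algebra of the graded homotopy Lie algebra $\pi_*(\Omega B)\otimes\qq$, one obtains that the Betti numbers $\dim H^q(\Omega B;\qq)$ grow exponentially in $q$. I would then transfer this growth to $F$ via the extended fibre sequence $\Omega B \to F \to X$ associated to the given fibration. Simple-connectedness of $X$, which follows from rational ellipticity together with nilpotency via a standard lifting argument, provides the Serre spectral sequence
\begin{align*}
E_2^{p,q}=H^p(X;\qq)\otimes H^q(\Omega B;\qq)\Longrightarrow H^{p+q}(F;\qq)
\end{align*}
with constant coefficients. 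Since $X$ is elliptic, $H^*(X;\qq)$ is concentrated in finitely many degrees $p\leq d$, so this spectral sequence has bounded horizontal support.

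The key step is then to show that the exponential growth of $H^*(\Omega B;\qq)$ must survive into $H^*(F;\qq)$, at least up to a polynomial factor. Heuristically each large total degree $n$ receives contributions $\sum_{p\leq d}\dim H^p(X;\qq)\cdot\dim H^{n-p}(\Omega B;\qq)$ on the $E_2$-page, and the bounded horizontal support $p\leq d$ restricts the range of nontrivial differentials, so not all of these exponentially many classes can be cancelled. Making this precise is best approached through minimal Sullivan models in the spirit of Halperin's original argument: one writes down a relative model for the fibration and uses ellipticity of $X$, i.e.\ finiteness of $V_X$, to transfer finiteness of the generating spaces to both $V_B$ and $V_F$ via an inductive dimension count, modulo a controlled error governed by the Betti growth of $F$. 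Exponential hyperbolic growth of $B$ is incompatible with this, yielding a contradiction.

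The main obstacle is precisely this transfer step: the trivial Serre spectral sequence inequality $P_F(t)\leq P_X(t)\cdot P_{\Omega B}(t)$ goes in the wrong direction, so one needs a substitute lower bound on $P_F(t)$ in terms of $P_{\Omega B}(t)$. Supplying such a bound requires simultaneous use of both hypotheses, namely the finite LS-category of $B$ (which bounds the length of nonzero differentials) and the bounded cohomological dimension of $X$ (which bounds the horizontal extent of the $E_2$-page). Once that obstacle is overcome, the exponential lower bound on $\dim H^n(F;\qq)$ directly contradicts the polynomial growth hypothesis, forcing $B$ to be rationally elliptic.
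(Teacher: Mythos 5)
First, a contextual point: the paper does not prove this theorem at all --- it is quoted from Halperin's unpublished manuscript \emph{Polynomial growth and elliptic spaces} and from Yeager's thesis, so there is no in-paper argument to compare against. Your proposal must therefore stand on its own, and as written it has a genuine gap, one you candidly flag yourself. The ``transfer step'' --- extracting a super-polynomial lower bound on $\dim H^*(F;\qq)$ from exponential growth of $H^*(\Omega B;\qq)$ via the spectral sequence of $\Omega B\to F\to X$ --- is not a technicality that follows from bounded horizontal support. The naive estimate $\dim E_\infty^{0,n}\geq \dim H^n(\Omega B)-\sum_{r=2}^{d+1}\dim H^r(X)\cdot\dim H^{n-r+1}(\Omega B)$ can be vacuous when the growth rate of $H^*(\Omega B)$ is, say, $C\alpha^n$ with $\alpha$ close to $1$, since the subtracted terms are of the same exponential order. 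There is a second, prior problem: the Friedlander--Halperin dichotomy in the form you invoke (elliptic versus exponential growth of $\sum_{i\le n}\dim\pi_i\otimes\qq$) is proved for simply-connected spaces with \emph{finite-dimensional} rational cohomology; for $B$ you only have finite type and $\cat B<\infty$, and finite-dimensionality of $H^*(B;\qq)$ is part of what must be established. (Also, rational ellipticity plus nilpotency does not give simple connectivity of $X$ --- think of a torus --- though this is a minor point compared with the other two.)

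The argument runs much more smoothly in the opposite direction, and this is almost certainly Halperin's route. Use the other segment of the fibration sequence, $\Omega X\to \Omega B\to F$, whose Serre spectral sequence gives the \emph{upper} bound $\dim H_n(\Omega B;\qq)\le\sum_{p+q=n}\dim H_p(F;\qq)\cdot\dim H_q(\Omega X;\qq)$. Since $X$ is elliptic, $H_*(\Omega X;\qq)\cong UL_X$ with $L_X$ finite-dimensional, so it grows polynomially by Poincar\'e--Birkhoff--Witt; combined with the polynomial growth hypothesis on $H_*(F)$ this shows $H_*(\Omega B;\qq)$ grows polynomially. One then concludes by the main theorem of the cited manuscript: a simply-connected space of finite type with finite LS-category whose loop-space homology grows polynomially is rationally elliptic. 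That last statement is itself deep (it rests on the depth theorem and the structure theory of the homotopy Lie algebra), so any ``blind'' proof that does not either invoke it or reprove it is incomplete; but at least the spectral-sequence inequality then points in the direction where it actually holds.
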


The proof of Theorem \ref{theoE} now splits into various separate theorems, which actually prove a stronger version. That is, they confirm Question \ref{ques01} in the respective cases.
\begin{theo}\label{theo05}
Let $M$ be simply-connected with singly generated rational cohomology algebra. If $M$ is the total space of a (rational) fibration of simply-connected spaces of finite type and finite formal dimension $F\hto{}M \to B$, then $\dim B>\dim F$.
\end{theo}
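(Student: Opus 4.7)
The plan is to split into cases according to the shape of the minimal Sullivan model of $M$. Since $H^*(M;\qq)$ is singly generated, this model is either (Case A) $(\Lambda(y),0)$ with $\deg y$ odd, so $M\simeq_\qq S^{\deg y}$, or (Case B) $(\Lambda(x,y),\dif y=x^{n+1})$ with $\deg x=2d$, so $H^*(M;\qq)\cong\qq[x]/(x^{n+1})$. In particular $M$ is rationally elliptic. I would first apply Theorem \ref{theo02} (Halperin) to $F\embedded M\to B$, using that the finite formal dimension of $F$ makes its Betti numbers bounded (hence satisfies the polynomial-growth hypothesis), to conclude that $B$ is rationally elliptic; the long exact sequence of rational homotopy groups with $M,B$ elliptic then forces $F$ to be rationally elliptic as well. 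A standard Serre-spectral-sequence argument gives $\dim M=\dim F+\dim B$.

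In Case A my plan is to invoke Lemma \ref{lemma02}, for which it suffices to verify the strong version of Problem \ref{conj02} for $M$: the transgression $\dif_0\co W^{\odd}\to V^{\even+1}$ is injective. Dually this asks that $\partial\co\pi_{m+1}(B)\otimes\qq\to\pi_m(F)\otimes\qq$ is surjective for every odd $m$, which is automatic from exactness whenever $\pi_m(M)\otimes\qq=0$ and hence for all $m\neq\deg y$. In the remaining degree $m=\deg y$, failure would make the Sullivan-dual map $V_M^m\to V_F^m$ of minimal models send the distinguished generator $y$ to a nonzero $\bar y\in V_F^m$; since $\dif y=0$ we would have $\bar\dif\bar y=0$, and by minimality of the model of $F$ such a $\bar y$ would represent a nonzero cohomology class of $F$ in degree $\deg y=\dim M$. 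This forces $\dim F\geq\dim M$ and hence $\dim B=0$, contradicting non-triviality.

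In Case B, the space $M$ is $F_0$, so $\chi_\pi(M)=0$; additivity of $\chi_\pi$ in the fibration together with $\chi_\pi\leq0$ for rationally elliptic spaces forces both $F$ and $B$ to be $F_0$. Their rational cohomologies therefore live in even degrees, and a parity check shows that every differential $d_r\co E_r^{p,q}\to E_r^{p+r,q-r+1}$ in the rational Serre spectral sequence would require $r$ to be simultaneously even (keeping $p+r$ even) and odd (keeping $q-r+1$ even), which is impossible. Hence the spectral sequence collapses at $E_2$ and the fibration is totally non-cohomologous to zero, giving $H^*(M;\qq)\cong H^*(F;\qq)\otimes H^*(B;\qq)$ as $H^*(B;\qq)$-modules via Leray--Hirsch. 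Since $H^*(M;\qq)=\qq[x]/(x^{n+1})$ is singly generated and the edge homomorphism is surjective, I obtain $H^*(F;\qq)=\qq[u]/(u^a)$ with $u=x|_F$ of degree $2d$; then using that $H^*(M;\qq)$ is a free $\pi^*H^*(B;\qq)$-module of rank $a$ pins down $\pi^*H^*(B;\qq)=\qq[x^a]/(x^{n+1})$, whence $H^*(B;\qq)=\qq[v]/(v^b)$ with $v=x^a$, $b=(n+1)/a$, and $\deg v=2da$. A direct computation gives $\dim F=2d(a-1)$ and $\dim B=2d(n+1-a)$, and the non-triviality constraint $b\geq2$, i.e.~$a\leq(n+1)/2$, yields $\dim B>\dim F$.

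The main obstacle I expect is the algebraic analysis in Case B: pinning down both $H^*(F;\qq)$ and $H^*(B;\qq)$ as specific truncated polynomial algebras purely from the constraint that $H^*(M;\qq)$ is singly generated. The key step is identifying $\pi^*H^*(B;\qq)$ as the specific graded subring $\qq[x^a]/(x^{n+1})$ of $\qq[x]/(x^{n+1})$ over which the whole algebra is a free module of the correct rank.
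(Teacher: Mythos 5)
Your argument is correct, and in both branches it deviates in an interesting way from the paper's proof. For the odd-sphere case the paper carries out a hands-on analysis of the contractible algebra $(C,\tilde\dif_0)$ in the model of the fibration, passes to the associated pure algebra, and then enumerates three sub-cases; you instead verify the strong transgression condition of Problem \ref{conj02} directly (surjectivity of $\partial$ is automatic in every odd degree except $\deg y$, and failure in degree $\deg y$ forces a nonzero class in $H^{\dim M}(F)$, hence $\dim B=0$) and then quote Lemma \ref{lemma02}. This is cleaner and nicely recycles the paper's own machinery; it silently uses $\dim M=\dim F+\dim B$, which holds for fibrations of elliptic spaces and is worth stating. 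For the even case the paper again argues with Sullivan models, locating the generators inside $V$, $W$ and $C$ to conclude that fiber and base have singly generated cohomology; you stay entirely on the cohomological side, deducing that both $F$ and $B$ are $F_0$ via additivity of $\chi_\pi$ (the paper uses multiplicativity of $\chi$), and then using Leray--Hirsch to identify $\pi^*H^*(B)$ as the subalgebra generated by $x^a$. The one step you should write out in full is the identification $\pi^*H^*(B;\qq)=\qq\left[x^a\right]/(x^{n+1})$: since every graded subalgebra of $\qq[x]/(x^{n+1})$ is spanned by the powers $x^s$ with $s$ in some additively closed subset $S\ni 0$, freeness of $H^*(M)$ over $\pi^*H^*(B)$ on the basis $1,x,\dots,x^{a-1}$ forces the smallest positive element of $S$ to be exactly $a$ and then $S=\{0,a,2a,\dots\}$ with $a\mid n+1$; this short counting argument closes the gap you flag, and the final dimension count $\dim B-\dim F=2d(n+2-2a)>0$ agrees with the paper's.
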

\begin{prf}
In this case $M$ is necessarily rationally elliptic.
Consider a fibration $F\hto{} M\xto{p} B$. We assume the fibration to be non-trivial.

Since the fiber cohomology is finite dimensional and since a finite CW-complex has finite Lusternik-Schirelmann category (cf.~\cite[Proposition 27.5, p.~354]{FHT01}), we may apply Theorem \ref{theo02} and the long exact homotopy sequence of a fibration to deduce that all three of $F$, $M$ and $B$ are rationally elliptic.

We now distinguish two cases. Either the rational cohomology of $M$ is concentrated in even degrees or $M$ is a rational odd-dimensional sphere.

\case{1} In this case the Euler characteristic of $M$ is positive. Since the Euler characteristic is multiplicative,
both $\chi(F)>0$ and $\chi(B)>0$. This implies that both $F$ and $B$ are $F_0$ spaces (with rational cohomology concentrated in even degrees). Consequently, the associated Leray--Serre spectral sequence degenerates at the $E_2$-term and
\begin{align}\label{eqn13}
H^*(M)\cong H^*(B)\otimes H^*(F)
\end{align}
as a module.

In the terminology from above the model of the fibration is of the form
\begin{align}\label{eqn14}
\big(\Lambda (V\oplus W),\dif\big) \cong \big(\Lambda V',\tilde \dif\big)\otimes (C,\tilde \dif_0)
\end{align}
with $(\Lambda V,\dif)$ a minimal model of $B$ and $(\Lambda W,\bar \dif)$ a minimal model of $F$ and with a contractible algebra $(C,\tilde \dif_0)$.
Due to observation \eqref{eqn13} we may choose a \emph{pure model} of the fibration---see \cite{Tho81}, \cite{Lup98}.

Thus, due to the properties of the splitting \eqref{eqn14}, we obtain that
\begin{align*}
\ker \bar \dif|_W \In
\ker \dif|_{V'}
=\langle u\rangle.
\end{align*}
Since $F$ is not trivial, it follows that $H^*(F)$ is generated by $[u]$ as an algebra, i.e.~we have $(\Lambda W,\bar\dif)=(\Lambda \langle u,u'\rangle,\bar \dif)$ with $\bar \dif u=0$, $\bar \dif u'=u^k$ for some $k>1$.

Since $H^*(M)$ is generated by one element, $[u]$, which also generates $H^*(F)$, we derive that $H^*(B)$ is also singly generated. Indeed, the only element from $\Lambda W$ which may lie in the contractible algebra $(C,\tilde \dif_0)$ is $u'$. Hence, there is at most one element $v\in V^\even$ in $C$, i.e.~$(C,\tilde \dif_0)\cong (\Lambda\langle u',v\rangle, (u'\mapsto v))$. Accordingly, this implies that $(\Lambda V,\dif)=(\Lambda \langle v,v'\rangle, (v' \mapsto v^l))$ for some $l>1$, since otherwise $H^*(M)$ could not be singly generated. Since $\tilde \dif_0 u'=v$ we compute $\deg u'=\deg v-1$ and $\dim F=\deg v-\deg u$, $\dim B=\deg v'+1-\deg v$ and
\begin{align*}
\dim B- \dim F=\deg v'+1-2\deg v+\deg u>\deg v'+1-2\deg v\geq 0
\end{align*}

\case{2} Suppose $M\cong_\qq \s^k$ with $k>1$ odd. Again we may assume that both $F$ and $B$ are rationally elliptic. Since the Leray--Serre spectral sequence does not have to degenerate at the $E_2$-term in this case, we argue as follows: The contractible algebra $(C,\tilde \dif_0)$ from \eqref{eqn14} is of the form $\Lambda (\tilde C \oplus \tilde \dif_0 \tilde C)$ for a graded vector space $\tilde C$. The vector space $\tilde C$ is finite dimensional, as it a subspace of $V\oplus W$. Indeed, the proof of \cite[Theorem 14.9, p.~187]{FHT01} shows that the decomposition \eqref{eqn14} may be obtained by merely changing the differential. It follows that $(\Lambda \tilde C,\tilde \dif_0)$ is an elliptic two-stage Sullivan algebra. We may now form the associated pure algebra $(\Lambda \tilde C,(\tilde \dif_0)_\sigma)$ (cf.~\cite[p.~438]{FHT01}). By \cite[Theorem 32.4, p.~438]{FHT01} the associated pure algebra is also elliptic. This implies that
\begin{align*}
\tilde C^\even=\tilde\dif_0(\tilde C)
\end{align*}
As a consequence of the proof of \cite[Theorem 14.9, p.~187]{FHT01} we have $\tilde C^\even\In V$ and $\tilde C^\odd \In W$. In other words, in particular, an odd-degree element of $V$ passes injectively to the minimal model of $M$, i.e.~
\begin{align*}
{\pi_\odd(p)\otimes \qq}&\co\pi_\odd(M)\otimes \qq\twoheadrightarrow\pi_\odd(B)\otimes \qq
\intertext{is surjective. Vice versa, the same arguments apply to show that}
{\pi_\even(i)\otimes \qq}&\co\pi_\even(F)\otimes \qq\hto{}\pi_\even(M)\otimes \qq
\end{align*}
is injective.

Since $\dim \pi_\even(M)=0$, we have $W^\even=0$.
Since $B$ is rationally elliptic and since $\tilde C^\even\In V$, we conclude that $\dim V^\odd\geq \dim \tilde C^\even$.
However, since $\pi_\odd(M)=\pi_*(\s^k)=\qq [\s^k]$, we obtain that
\begin{align}\label{eqn15}
\dim \tilde C^\even \leq \dim V^\odd\leq 1
\end{align}
We have $\tilde C^\odd=\tilde C^{\even-1}$.

All this amounts to discerning three cases:
\begin{enumerate}
\item $\tilde C^\odd=\tilde C^l\cong \qq\cong \tilde C^{l+1}=\tilde C^\even$ (for some odd $l>1$), $V^\odd=V^k=\qq$, $V^\even=\tilde C^\even$, $W=\tilde C^\odd$.
\item $\tilde C=0$, $V=V^k=\qq$, $W=0$.
\item $\tilde C=0$, $W=W^k=\qq$, $V=0$.
\end{enumerate}
(Note that a case analogous to case one of the form ``$\tilde C^\odd=\tilde C^l\cong \qq\cong\tilde C^{l+1}=\tilde C^\even$ (for some odd $l>1$), $V^\odd=0$, $V^\even=\tilde C^\even$, $W=\tilde C^\odd\oplus \qq[\s^k]$'' cannot occur due to relation \eqref{eqn15}.)

In cases (2) and (3) the fibration is trivial. In case one we compute
\begin{align*}
\dim B-\dim F&=k>0.
\end{align*}
\end{prf}
This theorem clearly provides one particular answer to Question \ref{ques01}.

The next corollary confirms the Wilhelm conjecture under stronger geometric assumptions.
\begin{cor}
Let $M^n$ be a simply-connected closed Riemannian manifold with either
\begin{itemize}
\item $2$-non-negative curvature operator (and positive curvature operator, in particular), or
\item weakly quarter-pinched curvature, or
\item
$\operatorname{sec} \geq 1$ and $\operatorname{diam} M \geq \pi/2$,
\end{itemize}
then $M$ satisfies the Wilhelm conjecture.
\end{cor}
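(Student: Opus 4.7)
The plan is to observe that in each of the three geometric settings, $M$ is known to be diffeomorphic (or at least rationally equivalent) to a compact rank-one symmetric space, so that $H^*(M;\qq)$ is singly generated as an algebra; then \thm{theo05} applies and delivers the Wilhelm inequality $\dim B > \dim F$.

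First I would dispose of the geometric reduction: since $M$ is compact, any Riemannian submersion $M \to B$ is a smooth fibre bundle $F \embedded M \to B$ of closed manifolds by Ehresmann's theorem, and simple-connectedness of $F$ and $B$ follows from that of $M$ together with the long exact homotopy sequence (using simple-connectedness of $B$, which is part of Wilhelm's setup, or can be deduced for CROSS-type $M$). Hence we are precisely in the hypothesis of \thm{theo05}, and it suffices to verify that $H^*(M;\qq)$ is singly generated in each of the three curvature regimes.

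For the three hypotheses, I would invoke the relevant classification theorems. If $M$ has $2$-non-negative curvature operator (in particular positive curvature operator), the Ricci flow convergence results of Hamilton and B\"ohm--Wilking, together with their extension to the $2$-non-negative regime by Ni--Wu, force the simply-connected $M$ to be diffeomorphic to a sphere. If $M$ is weakly $1/4$-pinched, the differentiable sphere theorem of Brendle--Schoen together with Brendle's rigidity extension shows that $M$ is diffeomorphic to a compact rank-one symmetric space, i.e.\ rationally $\s^n$, $\cc\pp^m$, $\hh\pp^m$, or the Cayley plane $\oo\pp^2$. Finally, if $\operatorname{sec} \geq 1$ and $\operatorname{diam} M \geq \pi/2$, the Grove--Shiohama diameter sphere theorem (strict case) combined with the Gromoll--Grove diameter rigidity theorem and its refinement by Wilking (equality case) yields that $M$ is homeomorphic to a sphere or isometric to a CROSS.

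In every case $M$ has the rational cohomology of a CROSS, so $H^*(M;\qq)$ is generated by a single algebra element (a truncated polynomial algebra in one even-degree class, or an exterior algebra on one odd-degree class). \thm{theo05} then closes the argument. The only (mild) obstacle is correctly matching each geometric hypothesis to its topological output; once that is done, the rational-homotopy step provided by \thm{theo05} is automatic, and in particular no further use of the finer machinery of Problem \ref{conj02} is needed.
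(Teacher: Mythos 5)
Your argument is essentially identical to the paper's: in each curvature regime one invokes the relevant classification/rigidity theorem (the paper cites B\"ohm--Wilking, Berger, Gromoll--Grove, and Wilking) to conclude that $M$ is a compact rank one symmetric space, hence has singly generated rational cohomology, and then Theorem \ref{theo05} gives $\dim B>\dim F$. The extra care you take with Ehresmann's theorem and the choice of references is fine but does not change the route.
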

\begin{prf}
In all three cases the respective results in \cite{BW08},  \cite{Ber60}, \cite{GG87} and \cite{Wil01} imply that $M$ is a compact rank one symmetric space and therefore has singly generated cohomology.
\end{prf}
\begin{rem}
A classical example of the type of fibration we are dealing with in case one of the proof is the twistor bundle $\s^2\hto{}\cc\pp^{2n+1}\to \hh\pp^n$. An example of a non-trivial bundle in case two is $\s^3\hto{} \s^{4n+3} \to \hh\pp^n$.
\end{rem}

\begin{theo}\label{theo06}
The Wilhelm conjecture holds true on all the known even-dimensional examples of manifolds admitting positive curvature.
\end{theo}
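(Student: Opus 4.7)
The plan is to combine Theorem~\ref{theo05}, which already handles every known even-dimensional example with singly generated rational cohomology (rational spheres, $\cc\pp^n$, $\hh\pp^n$, and the Cayley plane), with a case-by-case analysis of the four remaining known even-dimensional examples: the Wallach flag manifolds $F^6=\SU(3)/T^2$, $F^{12}=\Sp(3)/\Sp(1)^3$, $F^{24}=\F_4/\Spin(8)$, and the Eschenburg biquotient flag $\biq{\SU(3)}{T^2}$ of dimension $6$. Each of these is positively elliptic with $\chi=6$ and has total dimension $3d$ and a minimal Sullivan model of the form $(\Lambda(t_1,t_2,y_1,y_2),\dif)$ where $|t_1|=|t_2|=d$, $|y_1|=2d-1$, $|y_2|=3d-1$, $\dif t_i=0$, and $\dif y_1$ is a non-degenerate (rank-two) binary quadratic form on $\qq\langle t_1,t_2\rangle$---with $d\in\{2,4,8\}$ for the three Wallach flag manifolds and $d=2$ for the Eschenburg flag, where the non-degeneracy of $\dif y_1$ in the last case is a direct cohomology computation on the biquotient.

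Fix such an $M$ and suppose toward a contradiction that a non-trivial fibration $F\hookrightarrow M\to B$ of simply-connected compact manifolds exists with $\dim B\le\dim F$. Since $F$ is a compact manifold, its rational cohomology is finite-dimensional, so Theorem~\ref{theo02} yields rational ellipticity of $B$, and the long exact rational homotopy sequence together with rational ellipticity of $M$ and $B$ then yields rational ellipticity of $F$. Multiplicativity of the Euler characteristic in simply-connected fibrations gives $\chi(F)\chi(B)=6$ with both factors positive, so both $F$ and $B$ are $F_0$-spaces. Applying the inequality $\chi\ge 2^l$ of Remark~\ref{xrem} to the unique non-trivial factorization $\{\chi(B),\chi(F)\}=\{2,3\}$ forces each of $F$ and $B$ to have singly generated rational cohomology, hence to be rationally one of $\s^{2k}$, $\cc\pp^n$, $\hh\pp^n$, or the Cayley plane $\oo\pp^2$. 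Since $F$ has singly generated cohomology, Halperin's conjecture holds tautologically for $F$ by Theorem~\ref{theo08}, and the minimal model of $M$ arises as a relative Sullivan extension whose underlying graded vector space and generator degrees split as the direct sum of those of $B$ and $F$.

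The resulting enumeration of candidates satisfying $\dim B+\dim F=3d$, $\dim B\le\dim F$, $\{\chi(B),\chi(F)\}=\{2,3\}$, and matching the four-generator degree profile of $M$ is short: for $d=2$ only $\cc\pp^2\hookrightarrow M\to\s^2$ survives; for $d=4$ only $\hh\pp^2\hookrightarrow M\to\s^4$; for $d=8$ only $\oo\pp^2\hookrightarrow M\to\s^8$; and the Eschenburg case reduces to the $d=2$ analysis. In every surviving candidate the base is rationally $\s^d$ with minimal model $(\Lambda(a,b),\dif a=0,\dif b=a^2)$ where $|a|=d$ and $|b|=2d-1$. Because the degree-$(2d-1)$ part $V^{2d-1}$ of $M$'s minimal model is one-dimensional, under any isomorphism to the extension the generator $y_1$ must correspond, up to scalar, to the base generator $b$; hence $\dif y_1$ becomes $a^2$, a rank-one quadratic form on $V^d\cong\qq^2$. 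This contradicts the rank-two non-degeneracy of $\dif y_1$, closing each case uniformly.

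The main obstacle is the finite case-check reducing each $M$ to its single surviving candidate: one must solve the Diophantine constraints on Euler characteristics and dimensions of rational CROSSes and then compare generator degrees against the intrinsic minimal model of $M$---a straightforward but case-laden arithmetic verification. Once the candidate list is reduced to the four fibrations above, the rank-two versus rank-one quadratic-form obstruction closes each case, independent of the precise coefficients of $\dif y_1$; in particular, the Eschenburg biquotient is handled with no extra work once the non-degeneracy of its quadratic relation is noted. Together with Theorem~\ref{theo05} for the singly generated case, this exhausts all the known even-dimensional examples of simply-connected manifolds admitting positive sectional curvature.
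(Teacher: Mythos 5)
Your overall strategy is sound and your closing obstruction is essentially the paper's: the quadratic relation $\dif y_1$ in the minimal model of each flag manifold is a rank-two (indeed anisotropic) form on the two-dimensional degree-$d$ generator space, whereas any surviving product candidate would force it to be the rank-one form $a^2$ coming from the spherical base. The paper reaches the same point via the observation that no class $[z]\in H^{d}(E)$ satisfies $[z]^2=0$. However, there is a genuine gap in your middle step. The assertion that ``the minimal model of $M$ arises as a relative Sullivan extension whose underlying graded vector space and generator degrees split as the direct sum of those of $B$ and $F$'' is false in general, even for totally non-cohomologous-to-zero fibrations of $F_0$-spaces: in the twistor fibration $\s^2\hto{}\cc\pp^{5}\to\hh\pp^2$ the generator degrees of the total space are $\{2,11\}$, not $\{2,3\}\cup\{4,11\}$, because the transgression kills a pair of generators. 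The relative model need not be minimal, so you may not match generator degrees of $F$ and $B$ against those of $M$. Since this false principle is what you use to prune the candidate list, your enumeration is incomplete: with only the constraints $\chi(F)\chi(B)=6$, single generation of both factors, $\dim F+\dim B=3d$ and $\dim B\le\dim F$, the cases $\s^{8}\hto{}W^{12}\to\cc\pp^2$, $\s^{16}\hto{}W^{24}\to\hh\pp^2$ and $\s^{20}\hto{}W^{24}\to\cc\pp^2$ also survive and are never addressed.

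The gap is easy to close: since the fibration is TNCZ, $H^*(B;\qq)$ injects into $H^*(M;\qq)$, but $H^2(W^{12})=H^2(W^{24})=0$ while $H^2(\cc\pp^2)\neq 0$, and $H^4(W^{24})=0$ while $H^4(\hh\pp^2)\neq 0$; so these extra candidates die immediately. For the remaining candidates you should also justify that the relative model is already minimal (TNCZ forces the transgression $u\mapsto\lambda b$ to vanish, since otherwise $[u]$ would not lift to $H^*(M)$), which is what licenses the phrase ``under any isomorphism to the extension.'' With these repairs your argument is complete and is a legitimate, more enumerative alternative to the paper's proof, which avoids listing $(F,B)$ pairs altogether by splitting into two cases according to whether $H^*(B)$ is generated by elements of the contractible algebra and deducing $\dim B\ge 2d$ directly in each. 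Two smaller points: you should say a word on why the factorization $\{1,6\}$ is excluded ($\chi=1$ and $\chi\ge 2^l$ force a rationally trivial factor, hence a trivial fibration), and note that both Euler characteristics are positive because rationally elliptic spaces have $\chi\ge 0$.
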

\begin{prf}
The CROSSes are covered by Theorem \ref{theo05}. According to \cite{Zil07} it remains to consider
\begin{enumerate}
\item the flag manifolds $W^6=\SU(3)/T^2, W^{12}=\Sp(3)/\Sp(1)^3, W^{24}=\F_4/\Spin(8)$,
\item $\biq{\SU(3)}{T^2}$.
\end{enumerate}
We only give the arguments for $W^6$, the result for the remaining spaces follows completely analogously up to a degree shift. The space $W^6$ fits into the rationally non-trivial fibration
\begin{align*}
\s^2\to W^6\to \cc\pp^2
\end{align*}
We prove the Wilhelm conjecture for the total space of every such non-trivial rational fibration. This will also prove $(2)$. For the other homogeneous spaces there are corresponding bundles (cf.~\cite[p.~17]{Zil07}). Indeed, by \cite[p.~51]{Esc84} (and the formality of positively elliptic manifolds), the minimal model of $W^6$ is given by
\begin{align*}
(\Lambda \langle a,b,x,y\rangle, a\mapsto 0, b\mapsto 0, x\mapsto a^2+ab+b^2, y\mapsto b^3)
\intertext{the one of $\biq{\SU(3)}{T^2}$ by}
(\Lambda \langle a,b,x,y\rangle, a\mapsto 0, b\mapsto 0, x\mapsto a^2+ab-b^2, y\mapsto b^3)
\end{align*}
with $\deg a=\deg b=2$. The bundles for $W^{12}$ and $W^{24}$ are rationally non-trivial as well. This can be seen from the minimal model of $W^{12}$, which easily computes as
\begin{align*}
(\Lambda \langle a,b,x,y\rangle, a\mapsto 0, b\mapsto 0, x\mapsto a^2+ab+b^2, y\mapsto b^3)
\end{align*}
for $\deg a=\deg b=4$. The minimal model of $W^{24}$ can be derived from \cite[Theorem 1.1]{MW}, where the integral cohomology algebra of $W^{24}$ is given. It computes as
\begin{align*}
(\Lambda \langle a,b,x,y\rangle, a\mapsto 0, b\mapsto 0, x\mapsto a^2-ab+b^2, y\mapsto b^3)
\end{align*}
with $\deg a=\deg b=8$.

(For this we denote the generators of the integral cohomology $(2e(\mathcal{E}_1)+e(\mathcal{E}_2))/3$ by $a$ and $(e(\mathcal{E}_1)+2e(\mathcal{E}_2))/3$ by $b$ and the relations are then given as the second and third elementary symmetric polynomials in $(a,b-a,-b)$---using the terminology from \cite[Theorem 1.1]{MW}.)

\vspace{3mm}

So let us show that the total space $E$ of a rationally non-trivial bundle from above---for the sake of simplicity we work with an $\s^2$-bundle over $\cc\pp^2$---satisfies Question \ref{ques01}, i.e.~given a fibration $F\hto{}E\to B$ as described, then $\dim B>\dim F$. Again we derive that both fiber and base space are rationally elliptic of positive Euler characteristic. Thus the rational cohomology module of the total space splits as a product of the ones of fiber and base, since the Leray--Serre spectral sequence degenerates at the $E_2$-term for degree reasons. Consequently, the generators of the cohomology algebra of $F$ map injectively into the cohomology algebra of $E$.

Given the concrete minimal models, we compute easily that there is no element $[z]\in H^2(E)$ with $[z]^2=0$. (The same holds true in all the other cases for elements of degree $2,4,8$ respectively.) So suppose in a fibration $F\hto{} E\to B$ the cohomology of $B$ is not trivial. We may assume it is not identical to the cohomology of $E$, since the fibration would be trivial in this case. Analogously, we may suppose that the cohomology of $E$ does not equal the one of $F$. There remain two cases to be considered.

\case{1}
Assume that $H^*(B)$ is not entirely generated by elements of the contractible algebra $C$ from the splitting given in \eqref{eqn14}. That is, there is a closed element $z\in V$---again we denote by $(\Lambda V,\dif)$ the minimal model of $B$ and by $(\Lambda W,\bar\dif)$ the one of $F$---which represents a non-trivial cohomology class in $H^*(E)$. Then $[z]^2\neq 0$, i.e.~the subalgebra generated by $[z]$ has formal dimension $4$ at least, i.e.~$\dim B\geq 4$ and $\dim F\leq 2$.

\case{2}
Assume that $H^*(B)$ is generated as an algebra by (even-degree) elements of $C\neq 0$. This implies, in particular, that $H^2(E)\to H^2(F)$ is an isomorphism. Consequently, the lowest degree element of $V$ has degree at least $4$ and $\dim B\geq 4>\dim F$.

In the cases $W^{12}$ and $W^{24}$ the lowest degree non-trivial element of $B$ has to be in degrees larger or equal to $8$ respectively $16$.
\end{prf}

\begin{cor}
Let $M^n$ be an even-dimensional simply-connected closed Riemannian manifold with isometry group satisfying $\dim \Isom(M^n)\geq 2n-6$, then $M$ satisfies the Wilhelm conjecture.
\end{cor}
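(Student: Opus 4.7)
The plan is to reduce the corollary to Theorem \ref{theo06} by invoking a classification of positively curved manifolds with large isometry group. First observe that the Wilhelm conjecture is a conditional statement predicated on $M$ carrying positive curvature, and so it holds vacuously whenever $M$ does not admit a positively curved metric. I may therefore assume from the outset that $M^n$ is positively curved.

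Under this assumption, the hypothesis $\dim\Isom(M)\geq 2n-6$ is extremely restrictive. For any isometric action of $G=\Isom^0(M)$, the principal isotropy has dimension $\dim G-(n-k)\geq n-6+k$, where $k$ denotes the cohomogeneity, so the principal isotropy is very large and the action is (almost) transitive. Combining this with positive curvature and invoking the classical classifications---Wallach's classification of simply-connected positively curved homogeneous manifolds in the transitive case, together with the cohomogeneity-one classifications of Verdiani and Grove--Wilking--Ziller in the almost-transitive case---should force $M$ to be rationally either a compact rank-one symmetric space or one of the three Wallach flag manifolds $W^6=\SU(3)/T^2$, $W^{12}=\Sp(3)/\Sp(1)^3$, or $W^{24}=\F_4/\Spin(8)$. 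The only other presently known even-dimensional positively curved example, the Eschenburg biquotient $\biq{\SU(3)}{T^2}$ of dimension $6$, carries an isometry group of dimension strictly smaller than $6=2\cdot 6-6$ and is therefore excluded by the hypothesis.

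Having rationally identified $M$ with one of the listed manifolds, the Wilhelm conjecture for $M$ follows immediately from Theorem \ref{theo06} (with Theorem \ref{theo05} applying in each rank-one symmetric case, since a CROSS has singly generated rational cohomology). The main obstacle lies in the classification step: the bound $\dim\Isom(M)\geq 2n-6$ is finely tuned to include exactly all CROSSes together with the three Wallach flags---for which $\dim\Isom$ equals $8$, $21$, and $52$ respectively---while excluding the Eschenburg biquotient, so the actual reduction to this short list must rest on a careful appeal to the existing classification theory of positively curved manifolds with large isometry group.
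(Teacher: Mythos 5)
Your proposal follows essentially the same route as the paper: reduce via the classification of positively curved manifolds with $\dim \Isom(M^n)\geq 2n-6$ (the paper cites Wilking's symmetry classification \cite{Wilking06} together with \cite{Wallach72}, \cite{Ber76} for exactly this step, rather than the almost-transitivity heuristic you sketch) and then apply Theorems \ref{theo05} and \ref{theo06}. This is correct and matches the paper's argument.
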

\begin{prf}
This follows directly from the classification results in  \cite{Wilking06}, \cite{Wallach72}, \cite{Ber76} combined with Theorems \ref{theo05} and \ref{theo06}.
\end{prf}

\begin{rem}\label{rem01}
Let $M^n$, $n$ even, be Hard-Lefschetz, rationally elliptic, positively curved with symmetry rank at least $2\log_2n+n/8$. If $M$ fits into a fibration $F\hto{} M\xto{p} B$ as in Question \ref{ques01}, then $\dim F- \dim B\leq n/2$---a case uniquely realized by a (rational) $\cc\pp^{(n-l)/2}$--fibration over $\s^{l}$.

\vspace{3mm}

This is the situation we are investigating in Corollary \ref{cor02}. Thus we derive that we are either in the case of a complex projective space---here we may quote Theorem \ref{theo05}---or the minimal model of $M$ has the very special structure provided by the corollary.

As in Theorem \ref{theo05} we may assume $F$ and $B$ to be rationally elliptic. First we argue that---unless the fibration is trivial---the Hard-Lefschetz form $[u]$ has to lie in the fibre cohomology. Indeed, if it comes from the base, we compute $0=[u]^{\dim B+1}=(H^2(p)([u]))^{\dim B+1}\neq 0$ unless $\dim B=\dim M$.

\case{1} Assume both the elements $u$ and $a$ lie in the minimal model of the fiber. Let us show that in this case $\dim B=0$ and the fibration is trivial. For this we show that the model of the fibration is minimal already, i.e.~the algebra $(C,\tilde \dif_0)$ from \eqref{eqn14} is trivial.

Since $M$ has positive Euler characteristic, again we derive $\chi(F)>0$, $\chi(B)>0$. This implies that $B$ is positively elliptic. Since the cohomology generators of $M$ already lie in $F$ by assumption, we see that $C$ consists of at most two generators in even degrees and that $(\Lambda V,\dif)$ , the model of the base, satisfies $\dim V^\even\leq 2$. However, it is now easy to check that one cannot specify elements $x',y'\in W^\odd$ (together with a suited minimal model of the base)---where $(\Lambda W,\bar\dif)$ is the minimal model of $F$---with $\bar \dif x'\neq 0,\bar \dif y'\neq 0$ and $(\dif x')|_V\neq 0,(\dif y')|_V\neq 0$ such that the given minimal model of $M$ can result. This is due to the fact that $a^2$ and $au^{(n-2\deg a+2)/2}$ are summands of the respective differentials. That is, both differentials contain the factor $a$ in a summand, however, since $a$ appears in its second power at most, the given relations cannot come from relations of the base (after gluing). The same holds true if there is exactly one generator in even degree in $C$. Thus $C$ has to be trivial.

\case{2} Assume $u$ is in the minimal model of the fiber and $a$ lies in the minimal model of the base. Then the element $x$ must lie in the minimal model of the base---otherwise the base could not be finite-dimensional

The contractible algebra $C$ contains at most one generator of even degree. A direct check shows that there are two different cases. Either the model of the fibration is given by
\begin{align*}
(\Lambda \langle u,x',u',x, a,y\rangle, x'\mapsto u', u\mapsto 0, u'\mapsto 0,\\
x\mapsto a^2+k_1a(u')^{\deg a/\deg u'}+k_2\cdot (u')^{2\deg a/\deg u'}, \\
y\mapsto a(u')^{(n-2\deg a+2)/\deg u'}+k_3(u')^{(n-\deg a+2)/\deg u'})
\intertext{or by}
(\Lambda \langle u,x, a,y\rangle, x\mapsto a^2, y\mapsto u^{(n-\deg a+2)/2}+k\cdot au^{(n-2\deg a+2)/2} )
\end{align*}
with $k\neq 0$ in the second case.
In the first case the model of the base is generated by $u',a,x,y$ and the one of the fiber by $u,x'$. Consequently, we have that $\dim B>\dim F$. In the second case the model is minimal already. The model of the base is generated by $a,x$ and the model of the fibre by $u,y$. This case corresponds to a (rational) $\cc\pp^{(n-\deg a+2)/2}$--fibration over $\s^{\deg a}$. Since $\deg a\geq n/4+1$, we compute $\dim F-\dim B=(n-\deg a+2)-\deg a\leq n+2-2(n/4+1)=n/2$.
\end{rem}

\begin{theo}\label{theo03}
If $M^{2n}$ is a positively curved manifold with the rational homotopy type of a simply-connected hermitian symmetric space $N$ and if $M$ has symmetry rank at least $2\log_2 (2n)+6$, then $N$ is either $\cc\pp^{n}$ or $\SO(n+2)/\U(1)\times \SO(n)$ and gives a positive answer to Question \ref{ques01}.
\end{theo}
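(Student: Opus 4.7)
The first assertion---that $N$ is either $\cc\pp^n$ or $\SO(n+2)/\U(1)\times\SO(n)$---is immediate from Proposition \ref{prop01}, whose hypotheses match those of the present theorem verbatim.

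For the second assertion, I distinguish two cases. If $N\simeq_\qq\cc\pp^n$ (which also covers $N=\SO(n+2)/\U(1)\times\SO(n)$ with $n$ odd, since then the complex quadric $Q_n$ is rationally $\cc\pp^n$), then the rational cohomology of $M$ is singly generated and the conclusion follows from Theorem \ref{theo05}.

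It remains to treat the genuine case $n=2m$ even, so $M\simeq_\qq Q_{2m}$ with $H^*(M;\qq)\cong \qq[u,a]/(u^{m+1}-2ua,\,a^2-\tfrac14 u^{2m})$, $\deg u=2$, $\deg a=2m$. My plan is to show that no non-trivial rational fibration $F\hookrightarrow M\to B$ of simply-connected spaces of finite formal dimension exists, so that the required inequality $\dim B>\dim F$ holds vacuously. Given such a fibration, Halperin's Theorem \ref{theo02} makes both $F$ and $B$ rationally elliptic; since $\chi(M)=2m+2>0$ is multiplicative, both are $F_0$-spaces and the Leray--Serre spectral sequence collapses at $E_2$. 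Then the edge map $H^*(B;\qq)\hookrightarrow H^*(M;\qq)$ is an injection of algebras, and $H^*(M;\qq)\cong H^*(B;\qq)\otimes H^*(F;\qq)$ as $H^*(B;\qq)$-modules. Under this injection every even generator of $H^*(B;\qq)$ maps to a square-zero class in $H^*(M;\qq)$. In each degree $2k\in\{2,\dots,4m\}\setminus\{2m,4m\}$, however, $H^{2k}(M;\qq)$ is one-dimensional, spanned by $[u^k]$, which squares to the non-zero class $[u^{2k}]$, so no base generator can sit in such a degree. At degree $2m$ the only square-zero class is (up to scalar) $[b]:=[a]-\tfrac12[u^m]$, but a direct computation using $u^{m+1}=2ua$ shows $[b]\cdot[u]=0$ as well; since the Hard-Lefschetz class $[u]$ must pull back non-trivially to the fiber, the module isomorphism would force $[b]\cdot[u]\neq 0$ in $H^*(M;\qq)$, a contradiction. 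The only remaining possibility, a base generator in degree $4m$, forces $\dim B=4m=\dim M$ and hence $F$ to be a point, so the fibration is trivial.

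The main obstacle is establishing the two identities for $[b]$: that it is the unique (up to scalar) square-zero class in $H^{2m}(Q_{2m};\qq)$ and that it annihilates $[u]$. Together these exploit the full force of the coupling relations $u^{m+1}=2ua$ and $a^2=\tfrac14 u^{2m}$ in the quadric's cohomology, which distinguishes $Q_{2m}$ from the module-isomorphic product $\cc\pp^m\times\s^{2m}$ and precludes any non-trivial Sullivan KS-realization.
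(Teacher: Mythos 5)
Your reduction of the classification statement to Proposition \ref{prop01}, and of the cases $N=\cc\pp^n$ and $N=Q_n$ with $n$ odd to Theorem \ref{theo05}, is exactly what the paper does. For the even quadric, however, your argument has a fatal gap at its load-bearing step: the claim that \emph{every even algebra generator of $H^*(B;\qq)$ maps to a square-zero class in $H^*(M;\qq)$} is unjustified and false. In a totally non-cohomologous-to-zero fibration the edge map $H^*(B)\to H^*(M)$ is an injective ring map, so a base generator $v$ squares to zero in $H^*(M)$ only if $v^2=0$ already in $H^*(B)$, which there is no reason to expect; the twistor fibration $\s^2\hto{}\cc\pp^{2k+1}\to\hh\pp^k$ is a counterexample (the degree-$4$ base generator maps to $u^2$, whose square is non-zero). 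Without this premise your degree-by-degree exclusion of base generators collapses. There is a second, independent error: $\qq[u,a]/(u^{m+1}-2ua,\,a^2-\tfrac14u^{2m})$ is not a presentation of $H^*(Q_{2m};\qq)$ --- the two relations have the common zero locus $a=\tfrac12u^m$, so they are not a regular sequence and the quotient is infinite-dimensional. In the actual cohomology of the even quadric (e.g.\ in the paper's presentation $\qq[u,a]/(a^2+u^{2m},\,au)$, up to a sign depending on $m\bmod 2$) the class annihilated by $[u]$ has \emph{non-zero} square, while the square-zero classes in the middle degree are the ruling classes $[a]\pm[u^m]$ (which over $\qq$ exist only for one parity of $m$) and these do \emph{not} annihilate $[u]$. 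So both identities you attribute to $[b]$ fail, and with them the contradiction you derive in degree $2m$.

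For comparison, the paper handles the even quadric quite differently: it invokes the case analysis of Remark \ref{rem01} for Hard-Lefschetz positively elliptic manifolds with a minimal model on generators $u,a,x,y$. There, Case 1 (both $u$ and $a$ in the fiber) forces the fibration to be trivial, and Case 2 leaves only one problematic possibility, namely a rational $\cc\pp$-bundle over a sphere, whose model has $\dif x=a^2$; this is excluded for the quadric because its minimal model has $\dif x=a^2+u^n$, i.e.\ the square of the middle generator is a non-trivial power of $u$ rather than zero. Your ring-theoretic strategy could in principle be repaired --- for instance by tracking where the Hard-Lefschetz class $[u]$ lives (if it is a base class then $[u]^n\neq 0$ forces $\dim B=\dim M$) and then using the freeness of $H^*(M)$ over $H^*(B)$ together with the one-dimensionality of $H^{2k}(M)$ for $k\neq m$ --- but as written the two steps above do not hold.
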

\begin{prf}
The classification result is Proposition \ref{prop01}. In the case of a rational $\cc\pp^n$ we may quote Theorem \ref{theo05}. In the case of $\SO(n+2)/\U(1)\times \SO(n)$ we argue as in Remark \ref{rem01}. Indeed, the minimal model of this K\"ahler manifold is covered by the minimal models considered there. Case 1 applies equally. As for Case 2 we need to exclude a possible occurrence of the minimal model
\begin{align*}
(\Lambda \langle u,x, a,y\rangle, x\mapsto a^2, y\mapsto u^{(2n-\deg a+2)/2}+k\cdot au^{(2n-2\deg a+2)/2} )
\end{align*}
For this we compute the minimal model of $\SO(n+2)/\U(1)\times \SO(n)$ as the one of $\cc\pp^n$ if $n$ is odd and as
\begin{align*}
(\Lambda \langle u,a,x,y\rangle, x\mapsto a^2+u^{n}, y\mapsto au)
\end{align*}
with $\deg a=n$ if $n$ is even. Consequently, this space does not fit into a non-trivial rational fibration unless $\dim B>\dim F$.
\end{prf}



\vfill

\begin{center}
\noindent
\begin{minipage}{\linewidth}
\small \noindent \textsc
{Manuel Amann} \\
\textsc{Fakult\"at f\"ur Mathematik}\\
\textsc{Institut f\"ur Algebra und Geometrie}\\
\textsc{Karlsruher Institut f\"ur Technologie}\\
\textsc{Kaiserstra\ss e 89--93}\\
\textsc{76133 Karlsruhe}\\
\textsc{Germany}\\
[1ex]
\textsf{manuel.amann@kit.edu}\\
\textsf{http://topology.math.kit.edu/$21\_54$.php}
\end{minipage}
\end{center}

\vspace{10mm}

\begin{center}
\noindent
\begin{minipage}{\linewidth}
\small \noindent \textsc
{Lee Kennard} \\
\textsc{Department of Mathematics}\\
\textsc{South Hall}\\
\textsc{University of California}\\
\textsc{Santa Barbara, CA 93106-3080}\\
\textsc{USA}\\
[1ex]
\textsf{kennard@math.ucsb.edu}\\
\end{minipage}
\end{center}

\end{document}